\theoremstyle{plain}
\newtheorem{thm}{\protect\theoremname}
  \theoremstyle{plain}
  \newtheorem{prop}[thm]{\protect\propositionname}
  \theoremstyle{definition}
  \newtheorem{defn}[thm]{\protect\definitionname}
  \theoremstyle{remark}
  \newtheorem{rem}[thm]{\protect\remarkname}
  \theoremstyle{plain}
  \newtheorem{lem}[thm]{\protect\lemmaname}
  \theoremstyle{definition}
  \newtheorem{example}[thm]{\protect\examplename}
\providecommand{\definitionname}{Definition}
  \providecommand{\propositionname}{Proposition}
  \providecommand{\remarkname}{Remark}
\providecommand{\theoremname}{Theorem}
\theoremstyle{plain}
\theoremstyle{remark}
\def\sqr#1#2{{\vcenter{\vbox{\hrule height .#2pt \hbox{\vrule
 width .#2pt height#1pt \kern#1pt \vrule
width .#2pt} \hrule height .#2pt}}}}
\def\ds{\begin{displaystyle}}
\def\eds{\end{displaystyle}}
\def\<{\langle }
\def\>{\rangle }
\def\R{\mathbb R}
\DeclareMathAlphabet{\mathonebb}{U}{bbold}{m}{n}                           %
\title{Path dependent equations  driven by H\"older processes}
\author{Rafael Andretto Castrequini\thanks{rafael.andretto-castrequini@ensta-paristech.fr}}
\author{Francesco Russo\thanks{francesco.russo@ensta-paristech.fr}}
\affil[1]{ENSTA ParisTech, Universit\'e Paris-Saclay,
 Unit\'e de Math\'ematiques appliqu\'ees, 828, Boulevard des Mar\'echaux, F-91120 Palaiseau, France}
\providecommand{\definitionname}{Definition}
  \providecommand{\propositionname}{Proposition}
  \providecommand{\remarkname}{Remark}
\providecommand{\theoremname}{Theorem}
  \providecommand{\definitionname}{Definition}
  \providecommand{\lemmaname}{Lemma}
  \providecommand{\propositionname}{Proposition}
  \providecommand{\remarkname}{Remark}
\providecommand{\theoremname}{Theorem}
\providecommand{\definitionname}{Definition}
  \providecommand{\lemmaname}{Lemma}
  \providecommand{\propositionname}{Proposition}
  \providecommand{\remarkname}{Remark}
\providecommand{\theoremname}{Theorem}
\providecommand{\definitionname}{Definition}
  \providecommand{\examplename}{Example}
  \providecommand{\lemmaname}{Lemma}
  \providecommand{\propositionname}{Proposition}
  \providecommand{\remarkname}{Remark}
\providecommand{\theoremname}{Theorem}
\providecommand{\definitionname}{Definition}
  \providecommand{\examplename}{Example}
  \providecommand{\lemmaname}{Lemma}
  \providecommand{\propositionname}{Proposition}
  \providecommand{\remarkname}{Remark}
\providecommand{\theoremname}{Theorem}
\providecommand{\definitionname}{Definition}
  \providecommand{\examplename}{Example}
  \providecommand{\lemmaname}{Lemma}
  \providecommand{\propositionname}{Proposition}
  \providecommand{\remarkname}{Remark}
\providecommand{\theoremname}{Theorem}
\providecommand{\definitionname}{Definition}
  \providecommand{\examplename}{Example}
  \providecommand{\lemmaname}{Lemma}
  \providecommand{\propositionname}{Proposition}
  \providecommand{\remarkname}{Remark}
\providecommand{\theoremname}{Theorem}
\providecommand{\definitionname}{Definition}
  \providecommand{\examplename}{Example}
  \providecommand{\lemmaname}{Lemma}
  \providecommand{\propositionname}{Proposition}
  \providecommand{\remarkname}{Remark}
\providecommand{\theoremname}{Theorem}
  \providecommand{\definitionname}{Definition}
  \providecommand{\examplename}{Example}
  \providecommand{\lemmaname}{Lemma}
  \providecommand{\propositionname}{Proposition}
  \providecommand{\remarkname}{Remark}
\providecommand{\theoremname}{Theorem}
\begin{document}
\allowdisplaybreaks \maketitle 
\begin{abstract}
This paper investigates existence results for path-dependent
 differential equations driven by a Hölder function
where the integrals are  understood in the Young sense.  
The two main results are proved via an application of
Schauder theorem and the vector field is allowed to be unbounded.
The Hölder function is typically the trajectory of
a stochastic process.
\end{abstract}
\textbf{Key words:} Young integration; Path-dependent; Differential
equations. \\
\textbf{MSC 2010:}{60G22; 60H05; 60H22}


\section{Introduction}

The aim of the paper is to discuss existence theorems for a
path-dependent equation of the type 
\begin{equation} \label{EPrincipal}
Y_{t}=y_{0}+\int_{0}^{t}F\left(u,Y_{u}\left(\centerdot\right)\right)dX_{u}, \ t\in [0,T],
\end{equation}
where $X$ is an $\alpha$-Hölder continuous  $n$-dimensional 
process and $F$ is a path-dependent 
$m \times n$ matrix-valued vector field
defined on $\left[0,T\right]\times C(\left[0,T\right];{\mathbb R}^m)$.
The trajectories of the unknown process $Y$ are ${\mathbb R}^m$-valued 
$\alpha$-H\"older continuous functions;
 $Y_{u}\left(\centerdot\right)$
denotes the trajectory of $Y$ until time $u$, i.e, $Y_{u}\left(x\right)=Y_{x}$,
if $x\le u$ and $Y_{u}\left(x\right)=Y_{u}$ otherwise.
The integral is intended in the Young sense so that 
$F$ needs of course  to verify  a Hölder type regularity,
 see Sec. \ref{sec:Young-integral}
for details. 

Path-dependent (similarly to functional dependent or delay) equations have a
long story. 
 To the best of our knowledge  
 the first author who has contributed in this framework in the stochastic case, is \cite{cho}  
 motivated by \cite{delfour}, which is a significant contribution
 in the deterministic case. A relevant
 monograph in the subject is the one of \cite{mohammed}. Considerations
 about functional-dependent equations in law also appear in \cite{rw}.
 More recently several studies have been performed studying the relation
 between functional dependent equations and path-dependent PDEs see
 e.g. \cite{DGR, DGRnote, flandoli_zanco13, cosso_russo15a} in the framework of Banach space valued stochastic calculus and many others in the framework of 
path-dependent functional It\^o calculus, see \cite{leao_ohashi_simas14}
and references therein for latest developments. 

Young integral was introduced first in \cite{young1936inequality}.
A recent paper on the subject is \cite{gubinelli2004controlling}
and an excellent monograph recalling Young integral in the perspective
of rough paths is \cite{HairerBook}. That integral has been implemented
for the study of ordinary stochastic differential equations driven
by Hölder processes (see \cite{lyons-young, lejay:inria-00402397})
 and also SPDEs, see \cite{maslowski, gubinelli2006young}.
As far as we know, the present paper is the first one which discusses
functional-dependent equations in the framework of Young integral.

The aim of this paper is to discuss existence results 
 under suitable minimal assumptions on $F$. 
Contrarily to most of the literature on Young differential equations even in
the non-path dependent setting, the authors allow the
vector field  $F$ to be unbounded.
The main results about existence are Theorem \ref{thm:F_is_sublinear}
and Theorem \ref{T20};  the latter supposes $F$ to be bounded 
but with less restrictive H\"older type conditions
on $F$. The path-dependent framework however offers
other perspectives of generalization if one
assumes a different type on dependence
on the past trajectory.
For instance in Section \ref{sec:Particular-Case} we remark that, whenever the
dependence of $F$ with respect to the past allows a gap with
respect to the present, the construction of a solution can be
done iteratively.


\section{Preliminaries}

In this section we introduce some basic definitions.

 Let $U$ and $V$ be  Banach spaces and denote by $L=L\left(V,U\right)$
 the space of continuous linear maps from $V$ to $U$.

 We reserve the symbols $X$ to denote driving paths of our differential
 equation. Typically $X:\left[0,T\right]\rightarrow V$ is an $\alpha$-Hölder
 continuous. Hence there is a constant (the smallest one is denoted
 by $\left\Vert X\right\Vert _{\alpha}$) such that 
\[
\left|X_{t}-X_{s}\right|\le\left\Vert X\right\Vert _{\alpha}\left|t-s\right|^{\alpha},
\]
 for all $s,t\in\left[0,T\right]$. As usual we write, $X\in C^{\alpha}\left(\left[0,T\right],V\right)$
 and  
$$\left\Vert X\right\Vert _{\alpha}=\sup_{s,t\in\left[0,T\right], s \neq t
}\frac{\left|X_{t}-X_{s}\right|}{\left|t-s\right|^{\alpha}}.$$

It is sometimes useful to specify the closed interval $I=[t_{0},t_{1}]$
 where we evaluate $\left\Vert X\right\Vert _{\alpha}$. For this matter,
 we further define 
\[
\left\Vert X\right\Vert _{\alpha;I}:=\sup_{s,t\in I, s \neq t}\frac{\left|X_{t}-X_{s}\right|}{\left|t-s\right|^{\alpha}}.
\]
If $t_{0}\ge0$ and $\tau>0$ we will simply denote 
\[
\left\Vert X\right\Vert _{\alpha;t_{0},\tau}:=\left\Vert X\right\Vert _{\alpha;\left[t_{0},t_{0}+\tau\right]}.
\]

Usually we omit the symbol $U$ in $C^{\alpha}\left(I,U\right)$
becoming simply $C^{\alpha}\left(I\right)$. 
It is well-known that $\left\Vert \centerdot\right\Vert _{\alpha;I}$
induces semi-distance on the vector space $C^{\alpha}(I)$, though
we can endow it with a metric by setting $\left|\left|\left|Z-Y\right|\right|\right|_{\alpha;I}:=\left\lVert Z-Y\right\rVert _{\alpha;I}+\left|Z_{t_{0}}-Y_{t_{0}}\right|$,
for all $Z,Y\in C^{\alpha}\left(I\right)$. An important closed
subset of $C^{\alpha}\left(I\right)$ is obtained by fixing an \emph{initial
condition}. That is, for a fixed $a\in U$, we define $C_{a}^{\alpha}\left(I\right):=\left\{ Y\in C^{\alpha}\left(I\right)\ |\ Y_{t_{0}}=a\right\} $
and this is a closed subset of $C^{\alpha}\left(I\right)$. Moreover,
the metric $\left|\left|\left|Z-Y\right|\right|\right|_{\alpha;I}$
on $C_{a}^{\alpha}\left(I\right)$ coincides with $\left\Vert \centerdot\right\Vert _{\alpha;I}$,
i.e. $\left|\left|\left|Z-Y\right|\right|\right|_{\alpha;I}=\left\lVert Z-Y\right\rVert _{\alpha;I}$,
for all $Z,Y\in C^{\alpha}\left(I\right)$. For the sake of
clarity, when we refer to $C^{\alpha}$-topology we mean the topology
induced by $\left|\left|\left|\centerdot\right|\right|\right|_{\alpha;I}$, 
which coincides with $\left\lVert \centerdot\right\rVert _{\alpha;I}$
in the subset $C_{a}^{\alpha}\left(I\right)$. The set $C\left(I\right)$
is the usual Banach space of continuous functions equipped with the
$\sup$-norm, $\left\lVert Y\right\rVert _{\infty;I}:=\sup_{t\in I}\left|Y_{t}\right|$.

Since our differential equation involves a vector field acting on the whole
trajectory of a path, we make use of the following notation. Given
a continuous path $Y:\left[0,T\right]\rightarrow U$ we denote by
the calligraphic version of $Y$, namely $\mathcal{Y}$, 
onto $C\left(\left[0,T\right],U\right)$, i.e. 
\[
\mathcal{Y}:\underset{t}{\left[0,T\right]}\underset{\mapsto}{\rightarrow}\underset{\mathcal{Y}_{t}}{C\left(\left[0,T\right],U\right)},
\]
with 
\begin{equation}
\mathcal{Y}_{t}\left(x\right):=Y_{t\wedge x}.\label{Emin}
\end{equation}

 We observe that, if $Y$ is
$\alpha$-Hölder continuous then its lift is $\alpha$-Hölder as well.
Indeed, this is shown in the statement below.

\begin{prop}
Let $Y\in C^{\alpha}([0,T])$. Then, for any $0\le a\le b\le T$,
$\mathcal{Y}\in C^{\alpha}\left(\left[a,b\right],C\left[0,T\right]\right)$
we have
\begin{align}
\left\Vert \mathcal{Y}\right\Vert _{\alpha;\left[a,b\right]} & \le\left\Vert Y\right\Vert _{\alpha;\left[0,T\right]}\label{eq:Y_lift_holdernorm}\\
\left\lVert \mathcal{Y}_{t}\right\rVert _{\infty;\left[0,T\right]} & \le\left\Vert Y\right\Vert _{\alpha;\left[0,T\right]}\label{eq:Y_lift_holdernorm1}.
\end{align}
\end{prop}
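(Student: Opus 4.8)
The plan is to reduce both bounds to elementary pointwise estimates on $Y$, using that the lift acts by the truncation $\mathcal{Y}_t(x)=Y_{t\wedge x}$ from \eqref{Emin}. The only structural fact I need is that, for fixed $x$, the map $r\mapsto r\wedge x$ is $1$-Lipschitz, so it can never separate two evaluation times by more than their original distance; this is precisely what prevents the passage to the lift from inflating the H\"older constant, and it is the crux of the whole argument.

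For \eqref{eq:Y_lift_holdernorm} I fix $s,t\in[a,b]$ with $s<t$ and estimate $\left\Vert \mathcal{Y}_t-\mathcal{Y}_s\right\Vert_{\infty;[0,T]}$. Writing the pointwise increment as $(\mathcal{Y}_t-\mathcal{Y}_s)(x)=Y_{t\wedge x}-Y_{s\wedge x}$, I split according to the location of $x$: for $x\le s$ both truncations equal $x$ and the increment vanishes; for $s\le x\le t$ it equals $Y_x-Y_s$; and for $x\ge t$ it equals $Y_t-Y_s$. In every surviving case the two time arguments lie inside $[s,t]$, hence differ by at most $t-s$, so the H\"older property of $Y$ gives $\left|Y_{t\wedge x}-Y_{s\wedge x}\right|\le \left\Vert Y\right\Vert_{\alpha;[0,T]}\,\left|t-s\right|^{\alpha}$ uniformly in $x$. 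Taking the supremum over $x\in[0,T]$, dividing by $\left|t-s\right|^{\alpha}$, and then taking the supremum over $s\neq t$ in $[a,b]$ yields exactly \eqref{eq:Y_lift_holdernorm}. I expect this case analysis to be the main (indeed the only) obstacle: once one checks that the minimum-truncation never produces a pair of times farther apart than $s$ and $t$ themselves, no spurious $T^{\alpha}$-type factor appears and the seminorm of the lift is controlled by that of $Y$ directly.

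The second estimate \eqref{eq:Y_lift_holdernorm1} is more direct and does not require the increment computation. Since $x\mapsto t\wedge x$ has range $[0,t]$, the function $\mathcal{Y}_t$ takes values only among $\{Y_s:0\le s\le t\}$, whence $\left\Vert \mathcal{Y}_t\right\Vert_{\infty;[0,T]}=\sup_{0\le s\le t}\left|Y_s\right|$, which is then bounded by the corresponding norm of $Y$ on $[0,T]$. No further estimation is needed beyond evaluating this supremum.
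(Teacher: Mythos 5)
Your treatment of \eqref{eq:Y_lift_holdernorm} is correct and is essentially the paper's own argument: the same three-case split according to the position of $x$ relative to $[s,t]$, with the observation that the truncated evaluation times never separate by more than $t-s$, so no extra factor appears.

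The second bound is where your argument breaks down. As printed, \eqref{eq:Y_lift_holdernorm1} pairs a sup-norm on the left with the H\"older \emph{seminorm} $\left\Vert Y\right\Vert _{\alpha;[0,T]}$ on the right; taken literally this is false (take $Y\equiv c\neq 0$: then $\left\Vert \mathcal{Y}_{t}\right\Vert _{\infty;[0,T]}=|c|$ while $\left\Vert Y\right\Vert _{\alpha;[0,T]}=0$, since the paper's $\left\Vert \cdot\right\Vert _{\alpha}$ contains no zeroth-order term). The paper's own proof makes clear that the intended claim is $\left\Vert \mathcal{Y}_{t}\right\Vert _{\alpha;[0,T]}\le\left\Vert Y\right\Vert _{\alpha;[0,T]}$, i.e.\ that the snapshot $x\mapsto\mathcal{Y}_{t}(x)$ is itself $\alpha$-H\"older with no worse constant; this follows from $\left|\mathcal{Y}_{t}(x)-\mathcal{Y}_{t}(y)\right|=\left|Y_{t\wedge x}-Y_{t\wedge y}\right|\le\left\Vert Y\right\Vert _{\alpha}\left|t\wedge x-t\wedge y\right|^{\alpha}\le\left\Vert Y\right\Vert _{\alpha}\left|x-y\right|^{\alpha}$, which is exactly the $1$-Lipschitz-truncation fact you isolated at the outset but then did not deploy for this part. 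Your argument instead establishes $\left\Vert \mathcal{Y}_{t}\right\Vert _{\infty;[0,T]}\le\left\Vert Y\right\Vert _{\infty;[0,T]}$, a true but different inequality: it neither proves the displayed estimate (whose right-hand side is a seminorm that cannot dominate a sup-norm) nor the H\"older-in-$x$ statement the paper actually needs. Replace the final step by the increment estimate above.
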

\begin{proof}
Fix $s\le t$ in $\left[a,b\right]$. For any $x\in\left[0,T\right]$
we have 
\begin{align*}
\left|\mathcal{Y}_{t}\left(x\right)-\mathcal{Y}_{s}\left(x\right)\right| & \le\begin{cases}
\left|Y_{x}-Y_{x}\right| & ,\,x\in\left[0,s\right]\\
\left|Y_{x}-Y_{s}\right| & ,\,x\in\left[s,t\right]\\
\left|Y_{t}-Y_{s}\right| & ,\,x\in\left[t,T\right]
\end{cases}\\
 & \le\begin{cases}
0 & ,\,x\in\left[0,s\right]\\
\left\Vert Y\right\Vert _{\alpha}\left|x-s\right|^{\alpha} & ,\,x\in\left[s,t\right]\\
\left\Vert Y\right\Vert _{\alpha}\left|t-s\right|^{\alpha} & ,\,x\in\left[t,T\right]
\end{cases}\\
 & \le\left\Vert Y\right\Vert _{\alpha}\left|t-s\right|^{\alpha},
\end{align*}
so $\left\lVert \mathcal{Y}_{t}-\mathcal{Y}_{s}\right\rVert _{\infty;\left[0,T\right]}\le\left\Vert Y\right\Vert _{\alpha}\left|t-s\right|^{\alpha}$.
This proves $\mathcal{Y}\in C^{\alpha}\left(\left[a,b\right],C([0,T])\right)$
with $\left\Vert \mathcal{Y}\right\Vert _{\alpha;\left[a,b\right]}\le\left\Vert Y\right\Vert _{\alpha;\left[0,T\right]}$
i.e. equality \eqref{eq:Y_lift_holdernorm}.


Now, fix $t\in\left[a,b\right]$ and let any $x,y\in\left[0,T\right]$.
By definition, see   \eqref{Emin}, it follows 
\[
\left|\mathcal{Y}_{t}\left(x\right)-\mathcal{Y}_{t}\left(y\right)\right|\le\left\Vert Y\right\Vert _{\alpha}\left|x-y\right|^{\alpha},
\]
hence $\mathcal{Y}_{t}\in C^{\alpha}([0,T])$ and equality \eqref{eq:Y_lift_holdernorm1}
holds.
\end{proof}

\section{\label{sec:Young-integral}Young integral}

\label{S3}

At this level we recall the fundamental inequality characterizing
Young integral. For a complete treatment we refer the reader \cite[Ch. 6]{friz2010multidimensional}.
For a reference closer to our spirit we recommend \cite[pp.47-48;63]{HairerBook}.
We remark that the second inequality below is a consequence of the
first one.

\begin{thm}
\label{thm:Young_integral} Let $t_{0}\in\left[0,T\right]$, $a\in U$
and let $\tau>0$ such that $\tau+t_{0}\le T$. Given $W\in C^{\gamma}\left(\left[t_{0},t_{0}+\tau\right],L\left(V,U\right)\right)$
and $X\in C^{\alpha}\left(\left[0,T\right],V\right)$ with $\alpha+\gamma>1$, the map
\begin{align}\label{eq:yi_holder}
W\mapsto I\left(W\right):=a+\int_{t_{0}}^{\centerdot}W_{u}dX_{u}\in C^{\alpha}([0,T]),
\end{align}
is continuous and it satisfies, for $s\le t$ in $\left[t_{0},t_{0}+\tau\right]$, 
\begin{align}\label{eq:Young_inequality_general}
\left|\int_{s}^{t}W_{u}dX_{u}-W_{s}\left(X_{t}-X_{s}\right)\right| & \le k_{\alpha+\gamma}\left\lVert W\right\rVert _{\gamma;\left[s,t\right]}\left\lVert X\right\rVert _{\alpha;\left[s,t\right]}\left|t-s\right|^{\alpha+\gamma},
\end{align}
where $k_{\mu}=\frac{1}{1-2^{1-\mu }}$.

If furthermore $\tau\le1$, we can write the inequality above as 
\begin{equation}\label{eq:Young_inequality}
\left\Vert I\left(W\right)\right\Vert _{\alpha;t_{0},\tau}\le\left(k_{\alpha+\gamma}+1\right)\left\Vert X\right\Vert _{\alpha;t_{0},\tau}\left(\left\Vert W\right\Vert _{\gamma;t_{0},\tau}+\left|W_{t_{0}}\right|\right).
\end{equation}

\end{thm}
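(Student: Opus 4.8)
The plan is to establish the first estimate \eqref{eq:Young_inequality_general} as the core of the statement, since it is the fundamental Young estimate from which both the continuity of the map \eqref{eq:yi_holder} and the norm bound \eqref{eq:Young_inequality} follow by routine manipulations. Throughout I write $\mu := \alpha + \gamma > 1$; the integral $\int_s^t W_u\,dX_u$ is the limit of Riemann--Stieltjes sums, and the content of \eqref{eq:Young_inequality_general} is that the germ $W_s(X_t - X_s)$ approximates this limit with an error of order $|t-s|^\mu$.

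For the first inequality I would run the dyadic sewing argument used in the cited references. Fix $s \le t$ in $[t_0, t_0+\tau]$ and, for each $n\ge 0$, let $\mathcal{P}_n$ be the partition of $[s,t]$ into $2^n$ equal subintervals, setting $S_n := \sum_{[u,v]\in\mathcal{P}_n} W_u(X_v - X_u)$. Refining from level $n$ to level $n+1$ splits each $[u,v]$ at its midpoint $m$, and a direct computation gives the telescoping increment
\[
S_{n+1} - S_n = \sum_{[u,v]\in\mathcal{P}_n}(W_m - W_u)(X_v - X_m),
\]
so each summand is bounded by $\norm{W}_{\gamma;[s,t]}\norm{X}_{\alpha;[s,t]}\,(|v-u|/2)^{\mu}$. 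With $2^n$ intervals of length $(t-s)2^{-n}$ at level $n$, summing the geometric series $\sum_{n\ge 0}2^{n(1-\mu)}=(1-2^{1-\mu})^{-1}$ controls the total telescoped error by $2^{-\mu}k_\mu\,\norm{W}_{\gamma;[s,t]}\norm{X}_{\alpha;[s,t]}\,|t-s|^\mu \le k_\mu\,\norm{W}_{\gamma;[s,t]}\norm{X}_{\alpha;[s,t]}\,|t-s|^\mu$. Since $(S_n)$ is Cauchy its limit exists and is identified with the Young integral, giving \eqref{eq:Young_inequality_general}. The continuity of $W\mapsto I(W)$ is then immediate from linearity: $I(W)-I(\widetilde W)=\int_{t_0}^{\centerdot}(W_u-\widetilde W_u)\,dX_u$, and \eqref{eq:Young_inequality_general} applied to $W-\widetilde W$ controls the $\alpha$-Hölder increments of the difference by $\norm{W-\widetilde W}_\gamma$.

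Finally, \eqref{eq:Young_inequality} follows from \eqref{eq:Young_inequality_general} by the triangle inequality. For $s\le t$ in $[t_0,t_0+\tau]$ I would split
\[
\abs{\textstyle\int_s^t W_u\,dX_u}\le \abs{W_s}\,\abs{X_t-X_s}+\abs{\textstyle\int_s^t W_u\,dX_u-W_s(X_t-X_s)},
\]
then bound $\abs{W_s}\le \abs{W_{t_0}}+\norm{W}_\gamma\tau^\gamma\le \abs{W_{t_0}}+\norm{W}_\gamma$ using $\tau\le 1$, use $\abs{X_t-X_s}\le\norm{X}_\alpha|t-s|^\alpha$, and rewrite $|t-s|^\mu=|t-s|^\gamma|t-s|^\alpha\le|t-s|^\alpha$ (again $\tau\le1$) to strip the extra Hölder power off the remainder term. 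Combining, and replacing the local norms on $[s,t]$ by the global ones on $[t_0,t_0+\tau]$, yields
\[
\abs{\textstyle\int_s^t W_u\,dX_u}\le (k_\mu+1)\,\norm{X}_\alpha\bigl(\norm{W}_\gamma+\abs{W_{t_0}}\bigr)\,|t-s|^\alpha;
\]
dividing by $|t-s|^\alpha$ and taking the supremum gives \eqref{eq:Young_inequality}.

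The one genuinely delicate point is the first step: proving that the Riemann sums converge and that the limit is independent of the approximating partitions, which is exactly where the condition $\mu=\alpha+\gamma>1$ is used to make the telescoped series summable. Everything afterwards — continuity and the norm estimate \eqref{eq:Young_inequality} — is bookkeeping that only exploits $\tau\le 1$ to absorb the factors $\tau^\gamma$ and $|t-s|^\gamma$ into the constant.
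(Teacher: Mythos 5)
Your proof is correct and follows exactly the route the paper itself relies on: the paper does not prove Theorem \ref{thm:Young_integral} but defers \eqref{eq:Young_inequality_general} to the cited references (where it is established by precisely the dyadic telescoping/sewing computation you give) and merely remarks that \eqref{eq:Young_inequality} is a consequence of the first inequality, which your final triangle-inequality step carries out. The only points left implicit in your sketch --- identifying the limit of the dyadic sums with the partition-independent Young integral, and noting that the continuity of $I$ is with respect to the metric that also includes the initial value $\left|W_{t_0}-\widetilde{W}_{t_0}\right|$ --- are exactly what the cited sewing lemma supplies, so no genuine gap remains.
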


\section{The vector field $F$}

In this section we formally introduce the driving vector field of
the equation. For every $t \in [0,T]$ and $Y \in C([0,T],U)$,
$F(t,Y)$ is a linear map acting on  $V$ to  $U$.
Also we will present some fundamental inequalities regarding 
composition maps, such as $Y\mapsto F\left(t,\mathcal{Y}_{t}\right)$.

\begin{defn}
\label{def:F_is_non_ant}Let 
\[
F:\left[0,T\right]\times C\left(\left[0,T\right],U\right)\rightarrow L.
\]
We will say that it is \textbf{non-anticipating} if it satisfies 
\[
F\left(t,Y\right)=F\left(t,\mathcal{Y}_{t}\right),
\]
for all $Y\in C\left[0,T\right]$ and $t\in\left[0,T\right]$ \end{defn}
The requirement above means that $F\left(t,Y\right)$ does not depend
on what happened on $\left.Y\right|_{\left[t,T\right]}$. It will
indeed fulfill the property below. 

\begin{rem}
Given $Y$,$\tilde{Y}\in C\left[0,T\right]$ such that $Y|_{\left[0,s\right]}=\tilde{Y}|_{\left[0,s\right]}$.
Then 
\[
F\left(s,Y\right)=F\left(s,\tilde{Y}\right).
\]

\end{rem}
We assume Hölder regularity on $F$ as follows. The forthcoming examples
will play as motivation for the definition below. 
\begin{defn}
\label{def:F_spcial_holder} The vector field $F:\left[0,T\right]\times C^{\alpha}\left(\left[0,T\right],U\right)\rightarrow L$
will be said 
\textbf{ $\boldsymbol{\left(\alpha,\beta\right)}$-Hölder} if 
there are some non-negative constants $c_{\alpha,\beta}$ and $\tilde{c}_{\alpha,\beta}$,
\begin{align}
\left|F\left(t,Y\right)-F\left(s,Y\right)\right| & \le c_{\alpha,\beta}\left(1+\left\lVert Y\right\rVert _{\alpha,\left[s,t\right]}^{\beta}\right)\left|t-s\right|^{\alpha\beta},\ \forall s,t\in[0,T],\label{eq:F_is_spec_holder}\\
\left|F\left(s,Y\right)-F\left(s,\tilde{Y}\right)\right| & \le\tilde{c}_{\alpha,\beta}\left(\left\lVert Y-\tilde{Y}\right\rVert _{\alpha,\left[0,s\right]}+\left|Y_{0}-\tilde{Y}_{0}\right|\right)^{\beta},\ \forall s\in[0,T], \label{eq:F_is_spe_lip}
\end{align}
for all $Y, \tilde Y \in C^{\alpha}\left(\left[0,T\right],U\right).$
\end{defn}
\begin{rem}
\label{RSpecial} 
\begin{enumerate}
\item If $F$ is 
 $\left(\alpha,\beta\right)$-Hölder 
then  (\ref{eq:F_is_spe_lip}) implies that $F$ is non-anticipating.
\item It is well-known that, for $\alpha'<\alpha$, if $Y$ is $\alpha$-Hölder
then $Y$ is $\alpha'$-Hölder. This follows from the inequality $\left\lVert Y\right\rVert _{\alpha';\left[t_{0},t_{1}\right]}\le\left\lVert Y\right\rVert _{\alpha;\left[t_{0},t_{1}\right]}\left|t_{1}-t_{0}\right|^{\alpha-\alpha'}$.
This remark motivates the  lemma below, which will be useful in
the next sections. 
\end{enumerate}
\end{rem}
\begin{lem}
\label{lem:F_is_alpha_prime_holder} 
Let $0<\alpha'\le\alpha\le1$
and $\beta\in (0,1].$ Suppose that $F:\left[0,T\right]\times C^{\alpha'}\left(\left[0,T\right],U\right)\rightarrow L$
is an $\left(\alpha',\beta\right)$-Hölder continuous vector field.
Then $F$ is $\left(\alpha,\beta\right)$-Hölder continuous with constants
$c_{\alpha,\beta}:=c_{\alpha',\beta}\left(T^{\alpha'\beta}\vee1\right)$
and $\tilde{c}_{\alpha,\beta}:=\tilde{c}_{\alpha',\beta}\left(T^{(\alpha-\alpha')\beta}\vee1\right)$.\end{lem}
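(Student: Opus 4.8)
The plan is to derive both defining inequalities of Definition \ref{def:F_spcial_holder} for the pair $(\alpha,\beta)$ directly from those for $(\alpha',\beta)$, using as the single analytic ingredient the interval-norm comparison recalled in Remark \ref{RSpecial}, namely $\left\lVert Z\right\rVert_{\alpha';[t_0,t_1]}\le\left\lVert Z\right\rVert_{\alpha;[t_0,t_1]}\left|t_1-t_0\right|^{\alpha-\alpha'}$. First I would observe that, since $\alpha'\le\alpha$, one has $C^{\alpha}([0,T])\subset C^{\alpha'}([0,T])$, so that $F$ is well defined on the smaller space and the hypotheses \eqref{eq:F_is_spec_holder}--\eqref{eq:F_is_spe_lip} (written with $\alpha'$) may be invoked for every $Y,\tilde Y\in C^{\alpha}([0,T])$.

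The Lipschitz-type estimate \eqref{eq:F_is_spe_lip} is the clean part. Applying the comparison to $Z=Y-\tilde Y$ on $I=[0,s]$, whose length is $s\le T$, gives $\left\lVert Y-\tilde Y\right\rVert_{\alpha';[0,s]}\le (T^{\alpha-\alpha'}\vee1)\left\lVert Y-\tilde Y\right\rVert_{\alpha;[0,s]}$. Adding $\left|Y_0-\tilde Y_0\right|$ and factoring out $T^{\alpha-\alpha'}\vee1\ge1$, then raising the resulting bound to the power $\beta$, turns the $(\alpha',\beta)$-estimate into the $(\alpha,\beta)$-estimate with constant $\tilde c_{\alpha,\beta}=\tilde c_{\alpha',\beta}(T^{(\alpha-\alpha')\beta}\vee1)$, exactly as claimed.

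For the time-increment estimate \eqref{eq:F_is_spec_holder} I would split the right-hand side of the $(\alpha',\beta)$-bound into its growth part and its constant part. The growth part converts automatically: by the comparison on $I=[s,t]$ one finds $\left\lVert Y\right\rVert_{\alpha';[s,t]}^{\beta}\left|t-s\right|^{\alpha'\beta}\le\left\lVert Y\right\rVert_{\alpha;[s,t]}^{\beta}\left|t-s\right|^{(\alpha-\alpha')\beta}\left|t-s\right|^{\alpha'\beta}=\left\lVert Y\right\rVert_{\alpha;[s,t]}^{\beta}\left|t-s\right|^{\alpha\beta}$, so the interval factor produced by the comparison recombines precisely into the required exponent $\alpha\beta$. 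It then remains to treat the constant term $c_{\alpha',\beta}\left|t-s\right|^{\alpha'\beta}$, and this is the delicate step, since it carries no factor of $\left\lVert Y\right\rVert$ that could absorb the interval length through the comparison. Here one must use the finite horizon directly, controlling $\left|t-s\right|^{\alpha'\beta}$ by means of $\left|t-s\right|\le T$, which is what produces the factor $T^{\alpha'\beta}\vee1$ and hence the constant $c_{\alpha,\beta}=c_{\alpha',\beta}(T^{\alpha'\beta}\vee1)$.

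The main obstacle is precisely this last step: the $\left\lVert Y\right\rVert$-free term scales with the lower exponent $\alpha'\beta$ rather than $\alpha\beta$, so it does not behave monotonically under the passage $\alpha'\to\alpha$ and must be controlled through the boundedness of the time interval rather than through the norm comparison. Once the two parts are recombined, collecting the larger of the two constants and using $T^{\alpha'\beta}\vee1\ge1$ to dominate the growth part as well, one arrives at \eqref{eq:F_is_spec_holder} for $(\alpha,\beta)$ with the stated $c_{\alpha,\beta}$, which together with the previous paragraph completes the verification.
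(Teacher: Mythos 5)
Your proposal follows the same route as the paper's proof: the single ingredient is the comparison $\lVert Z\rVert_{\alpha';I}\le\lVert Z\rVert_{\alpha;I}\,|I|^{\alpha-\alpha'}$, applied on $[0,s]$ for \eqref{eq:F_is_spe_lip} and on $[s,t]$ for \eqref{eq:F_is_spec_holder}. Your treatment of the Lipschitz-type estimate is correct and coincides with the paper's, yielding the stated constant $\tilde c_{\alpha,\beta}$.

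For the time-increment estimate, however, the recombination you describe in your last step does not deliver the inequality you claim. After converting the growth part you are left with $c_{\alpha',\beta}\bigl(|t-s|^{\alpha'\beta}+\lVert Y\rVert_{\alpha;[s,t]}^{\beta}|t-s|^{\alpha\beta}\bigr)$, and bounding $|t-s|^{\alpha'\beta}$ by $T^{\alpha'\beta}$ leaves a term carrying no power of $|t-s|$ at all; since $\alpha\beta>\alpha'\beta$ when $\alpha'<\alpha$, there is no constant $C$ with $|t-s|^{\alpha'\beta}\le C\,|t-s|^{\alpha\beta}$ uniformly as $|t-s|\to0$, so the $Y$-free term cannot be re-absorbed into a bound of the form $c_{\alpha,\beta}\bigl(1+\lVert Y\rVert_{\alpha}^{\beta}\bigr)|t-s|^{\alpha\beta}$. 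This is not a defect peculiar to your write-up: the paper's own final display in this proof retains the exponent $\alpha'\beta$ rather than $\alpha\beta$, i.e.\ what is actually established (and what is used later, e.g.\ in Step 2 of the proof of Theorem \ref{thm:F_is_sublinear}) is that $F$ restricted to $C^{\alpha}$ satisfies an estimate with the norm $\lVert Y\rVert_{\alpha;[s,t]}$ on the right-hand side but with the time exponent $\alpha'\beta$ inherited from the hypothesis. You should either state that weaker conclusion explicitly or acknowledge that the exponent on $|t-s|$ cannot be upgraded; as written, the sentence claiming you ``arrive at \eqref{eq:F_is_spec_holder} for $(\alpha,\beta)$'' with exponent $\alpha\beta$ asserts something your displayed manipulations (and the paper's) do not give.
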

\begin{proof}
Let $s<t$ belonging to $\left[0,T\right]$ and let 
$Y\in C^{\alpha}([0,T])$.
Using the inequality $\left\lVert Y\right\rVert _{\alpha';\left[s,t\right]}\le\left\lVert Y\right\rVert _{\alpha;\left[s,t\right]}\left|t-s\right|^{\alpha-\alpha'}$
and $\left(\alpha',\beta\right)$-Hölder continuity of $F$ (below
we are going to omit the sub-index $\beta$ related to the constants $c_{\alpha,\beta}$,
$\tilde{c}_{\alpha,\beta}$ , $c_{\alpha',\beta}$ and $\tilde{c}_{\alpha',\beta}$),
we obtain
\begin{align*}
\left|F\left(t,Y\right)-F\left(s,Y\right)\right| & \le c_{\alpha'}\left(1+\left\lVert Y\right\rVert _{\alpha';\left[s,t\right]}^{\beta}\right)\left|t-s\right|^{\alpha'\beta}\\
 & \le c_{\alpha'}\left(1+\left\lVert Y\right\rVert _{\alpha;\left[s,t\right]}^{\beta}\left|t-s\right|^{\left(\alpha-\alpha'\right)\beta}\right)\left|t-s\right|^{\alpha'\beta}\\
 & =c_{\alpha'}\left(\left|t-s\right|^{\alpha'\beta}+\left\lVert Y\right\rVert _{\alpha;\left[s,t\right]}^{\beta}\left|t-s\right|^{\alpha\beta}\right)\\
 & \le c_{\alpha'}\left(T^{\alpha'\beta}+\left\lVert Y\right\rVert _{\alpha;\left[s,t\right]}^{\beta}\left|t-s\right|^{\alpha\beta}\right)\\
 & \le c_{\alpha'}\left(T^{\alpha'\beta}\vee1\right)\left(1+\left\lVert Y\right\rVert _{\alpha;\left[s,t\right]}^{\beta}\right)\left|t-s\right|^{\alpha'\beta},
\end{align*}
hence  equality (\ref{eq:F_is_spec_holder}) holds with $c_{\alpha}=c_{\alpha'}\left(T^{\alpha'\beta}\vee1\right)$.
 It remains to show \eqref{eq:F_is_spe_lip}. 
 Let $Y,\tilde{Y}\in C^{\alpha}([0,T])$
and $s\in\left[0,T\right]$. We have
\begin{align*}
\left|F\left(s,Y\right)-F\left(s,\tilde{Y}\right)\right| & \le\tilde{c}_{\alpha'}\left(\left\lVert Y-\tilde{Y}\right\rVert _{\alpha',\left[0,s\right]}+\left|Y_{0}-\tilde{Y}_{0}\right|\right)^{\beta}\\
 & \le\tilde{c}_{\alpha'}\left(\left\lVert Y-\tilde{Y}\right\rVert _{\alpha,\left[0,s\right]}s^{\alpha-\alpha'}+\left|Y_{0}-\tilde{Y}_{0}\right|\right)^{\beta}\\
 & \le\tilde{c}_{\alpha'}\left(T^{(\alpha-\alpha')\beta}\vee1\right)\left(\left\lVert Y-\tilde{Y}\right\rVert _{\alpha,\left[0,s\right]}+\left|Y_{0}-\tilde{Y}_{0}\right|\right),
\end{align*}
which proves the claim with 
$\tilde{c}_{\alpha}=\tilde{c}_{\alpha'}\left(T^{(\alpha-\alpha')\beta}\vee1\right)$.
\end{proof}
\begin{example} 
B. Dupire (see \cite{dupire2009functional}) introduced a notion of
non-anticipating functional in the sense introduced below. Denote $\Lambda:=\cup_{s\in\left[0,T\right]}\Lambda^{s}$ and $\Lambda^{s}:=C([0,s])$. We endow $\Lambda$    with the metric $d$ defined as follows. For any $Z^{t},Y^{s}\in\Lambda$ (the superscript here means  that $Z^{t}\in\Lambda^{t}$ and $Y^{s}\in\Lambda^{s}$) assuming $s\le t$, he defines
 $d\left(Z^{t},Y^{s}\right):=\left|t-s\right|+\left\lVert Z^{t}-\mathcal{Y}_{s}^{s}\right\rVert _{\infty;[0,t]}$,
where similarly as at \eqref{Emin}, we set 
$\mathcal{Y}^s_{u}\left(x\right):=Y^s_{u\wedge x}, u, x \in [0,t].$
Let $f:\Lambda\rightarrow\mathbb{R}$ be 
 a $\beta$-Hölder continuous functional with respect to $d$.
 Then the vector field $F$ defined by
\[ F\left(t,Y\right):=f\left(\left.Y\right|_{\left[0,t\right]}\right),\   
\forall t\in [0,T],\  \forall Y\in C^\alpha ([0,T]),
\] 
is non-anticipating in the sense of Definition \ref{def:F_is_non_ant}
and it is $\left(\alpha,\beta\right)$-Hölder.

Indeed, for $Y\in C^{\alpha}([0,T])$, we have 
\begin{align*}
\left|F\left(t,Y\right)-F\left(s,Y\right)\right| & =\left|f\left(\left.Y\right|_{\left[0,t\right]}\right)-f\left(\left.Y\right|_{\left[0,s\right]}\right)\right|\\
 & \le\left\Vert f\right\Vert _{\beta}d\left(\left.Y\right|_{\left[0,t\right]},\left.Y\right|_{\left[0,s\right]}\right)^{\beta}\\
 & \le\left\Vert f\right\Vert _{\alpha}\left(\left|t-s\right|+\left\lVert Y-\mathcal{Y}_{s}\right\rVert _{\infty;0,t}\right)^{\beta}\\
 & =\left\Vert f\right\Vert _{\alpha}\left(\left|t-s\right|+\sup_{x\in\left[s,t\right]}\left|Y_{x}-Y_{s}\right|\right)^{\beta}\\
 & \le\left\Vert f\right\Vert _{\alpha}\left(\left|t-s\right|+\left\Vert Y\right\Vert _{\alpha;\left[s,t\right]}\left|t-s\right|^{\alpha}\right)^{\beta}\\
 & \le\left\Vert f\right\Vert _{\alpha}\left(T^{1-\alpha}+\left\Vert Y\right\Vert _{\alpha;\left[s,t\right]}\right)^{\beta}\left|t-s\right|^{\alpha\beta}.
\end{align*}
This proves (\ref{eq:F_is_spec_holder}).

It remains to  prove  (\ref{eq:F_is_spe_lip}). Given $Y,Z\in 
C^{\alpha}([0,T])$ and
 $s \in [0,T]$, 
\begin{align*}
\left|F\left(s,Y\right)-F\left(s,Z\right)\right| & =\left|f\left(\left.Y\right|_{\left[0,s\right]}\right)-f\left(\left.Z\right|_{\left[0,s\right]}\right)\right|\\
 & \le\left\lVert f\right\rVert _{\beta}\left\{ \left|s-s\right|+\left\lVert Y-Z\right\rVert _{\infty;\left[0,s\right]}\right\} ^{\beta}\\
 & \le\left\lVert f\right\rVert _{\beta}\left\{ 0+\left\lVert Y-Z\right\rVert _{\alpha;\left[0,s\right]}s^{\alpha}+\left|Y_{0}-Z_{0}\right|\right\} ^{\beta}.
\end{align*}
\end{example}

\begin{example}
\emph{Young integral functional}. Let $\alpha,\gamma\in(0,1]$ with 
 $\alpha+\gamma>1$. Fix a function $g$ in $C^\gamma ([0,T])$.
Define the vector field $F_{g}$ by setting $F_{g}\left(t,Y\right):=\int_{0}^{t}g_{u}dY_{u}$,
for each $Y\in C^{\alpha}([0,T])$. Then $F_{g}$ is  $\left(\alpha,1\right)$-Hölder continuous. Indeed, using
Young integral inequality (\ref{eq:Young_inequality_general}) with $W$ (and $X$) instead of  $g$ (and $Y$), for any $s,t\in\left[0,T\right]$
and $Y \in C^\alpha ([0,T])$, it follows that
\begin{align*}
\left|F_{g}\left(t,Y\right)-F_{g}\left(s,Y\right)\right| & = \left| \int_{s}^{t}g_{u}dY_{u} \right| \\
 & \le k_{\alpha+\gamma}\left\lVert g\right\rVert _{\gamma;\left[s,t\right]}\left\lVert Y\right\rVert _{\alpha;\left[s,t\right]}\left|t-s\right|^{\alpha+\gamma}+\left|g_{s}\right|\left|Y_{t}-Y_{s}\right|\\
 & \le\left\{ k_{\alpha+\gamma}\left\lVert g\right\rVert _{\gamma;\left[0,T\right]}T^{\gamma}+\left\lVert g\right\rVert _{\infty;\left[0,T\right]}\right\} \left\lVert Y\right\rVert _{\alpha;\left[s,t\right]}\left|t-s\right|^{\alpha},
\end{align*}
which proves (\ref{eq:F_is_spec_holder}). 

Now, given any $Y,Z\in C^{\alpha }([0,T])$ and $s\in [0,T]$, we apply previous
inequality with $Y-Z$ instead of $Y$ and for $s= 0$. Thus, we obtain
\begin{align*}
\left|F_{g}\left(s,Y\right)-F_{g}\left(s,Z\right)\right| & =\left| \int_{0}^{s}g_{u}d(Y-Z)_{u} \right| \\
 & \le\left\{ k_{\alpha+\gamma}\left\lVert g\right\rVert _{\gamma;\left[0,T\right]}T^{\gamma}+\left\lVert g\right\rVert _{\infty;\left[0,T\right]}\right\} \left\lVert Y-Z\right\rVert _{\alpha;\left[0,s\right]}T^{\alpha},
\end{align*}
which proves \eqref{eq:F_is_spe_lip}. 
\end{example}

\begin{example}
\emph{Young Integral functional (continued). }More generally, we
 consider the vector field $F\left(t,Y\right):=h\left(t,\int_{0}^{t}g_{u}^{1}dY_{u},\ldots,\int_{0}^{t}g_{u}^{N}dY_{u}\right)$,
where $g^{i}\in C^{\gamma_{i}}([0,T])$ with $\alpha+\gamma_{i}>1, 
1 \le i \le N,$
and $h:\left[0,T\right]\times\mathbb{R}^{N}\rightarrow\mathbb{R}$
such there is a constant $K>0$ with
\[
\left|h\left(t,b\right)-h\left(s,a\right)\right|\le K\left\{ \left|t-s\right|^{\alpha\beta}+\max_{i=1,\ldots,N}\left|b_{i}-a_{i}\right|^{\beta}\right\},\ \forall t\in [0,T], \forall a,b\in \mathbb{R}^{N}.
\]
Then $F$ is $\left(\alpha,\beta\right)$-Hölder. Indeed, it is easy
to see that $F$ satisfies the inequalities 
\begin{align*}
\left|F\left(t,Y\right)-F\left(s,Y\right)\right| & \le K\left(1+\max_{i=1,\ldots,N}c_{i}\left\lVert Y\right\rVert _{\alpha;\left[s,t\right]}^{\beta}\right)\left|t-s\right|^{\alpha\beta},\\
\left|F\left(s,Y\right)-F\left(s,Z\right)\right| & =\left|F\left(s,Y-Z\right)\right|\le K\max_{i=1,\ldots,N}c_{i}\left\lVert Y-Z\right\rVert _{\alpha;\left[0,s\right]}^{\beta}T^{\alpha\beta},
\end{align*}
for any  $Y,Z\in C^{\alpha}$ and $s,t\in\left[0,T\right]$ where
$c_{i}:=\left(k_{\alpha+\gamma_{i}}\left\lVert g_{i}\right\rVert _{\gamma_{i}}T^{\gamma_{i}}+\left\lVert g_{i}\right\rVert _{\infty}\right)^{\beta}$.
\end{example} 

\begin{rem}
\label{rem:Geometric_interpolation} In the proposition below, we
will use a simple technique involving Hölder norms inequalities. 
It is called \textbf{geometric interpolation}
(in contrast to the linear one, $a\le\theta a+\left(1-\theta\right)b\le b$)
which states that,
whenever $W\in C^{\alpha}$ and $\theta\in\left(0,1\right)$, then
\begin{equation} \label{geo_int1}
\left\lVert W\right\rVert _{\alpha\theta}\le\left\lVert W\right\rVert _{0}^{1-\theta}\left\lVert W\right\rVert _{\alpha}^{\theta},
\end{equation}
recalling the notation 
$\left\Vert W\right\Vert _{0}=\sup_{s,t}\left|W_{t}-W_{s}\right|$.
The proof of \eqref{geo_int1} is a consequence of the equality $\frac{\left|W_{t}-W_{s}\right|}{\left|t-s\right|^{\alpha\theta}}=\left|W_{t}-W_{s}\right|^{1-\theta}\left(\frac{\left|W_{t}-W_{s}\right|}{\left|t-s\right|^{\alpha}}\right)^{\theta}$.
In particular we get 
\begin{equation}\label{geo_int2}
\left\lVert W\right\rVert _{\alpha\theta}\le2^{1-\theta}\left\lVert W\right\rVert _{\infty}^{1-\theta}\left\lVert W\right\rVert _{\alpha}^{\theta}.
\end{equation}
\end{rem}

\begin{prop}\label{prop:FoY_is_continuous}
	Let $F:\left[0,T\right]\times C^{\alpha}\left(\left[0,T\right],U\right)\rightarrow L$
be a non-anticipating and $\left(\alpha,\beta\right)$-Hölder continuous
vector field.

Fix $t_{0},t_{1}\in\left[0,T\right]$. Given $Y,Z\in C^{\alpha}\left(\left[0,t_{1}\right],U\right)$ 
 the inequality 
\begin{equation}
\left\lVert F\left(\centerdot,Y\right)\right\rVert _{\alpha\beta;\left[t_{0},t_{1}\right]}\le c_{\alpha,\beta}\left(1+\left\lVert Y\right\rVert _{\alpha;\left[t_{0},t_{1}\right]}^{\beta}\right)\label{eq:Comp1}
\end{equation}
holds. Moreover, if $\left\Vert Y\right\Vert _{\alpha;\left[t_{0},t_{1}\right]},\left\Vert Z\right\Vert _{\alpha;\left[t_{0},t_{1}\right]}\le R$, 
for any $\theta\in\left(0,1\right)$, it follows
\begin{equation}
\left\lVert F\left(\centerdot,Y\right)-F\left(\centerdot,Z\right)\right\rVert _{\alpha\beta\theta;\left[t_{0},t_{1}\right]}\le2\tilde{c}_{\alpha,\beta}^{1-\theta}c_{\alpha;\beta}^{\theta}\left(1+R^{\beta}\right)^{\theta}\left(\left\lVert Y-Z\right\rVert _{\alpha;0,t_{1}}+\left|Y_{0}-\tilde{Y}_{0}\right|\right)^{\beta\left(1-\theta\right)},\label{eq:Comp2}
\end{equation}
where $c_{\alpha,\beta}$ and $\tilde{c}_{\alpha,\beta}$
are the constants introduced in Definition \ref{def:F_spcial_holder}. \end{prop}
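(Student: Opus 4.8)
The plan is to establish the two estimates separately, obtaining \eqref{eq:Comp1} as a direct consequence of the definition and reserving the geometric interpolation of Remark~\ref{rem:Geometric_interpolation} for \eqref{eq:Comp2}.

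First I would prove \eqref{eq:Comp1}. For $s,t\in[t_0,t_1]$ the defining inequality \eqref{eq:F_is_spec_holder} gives
$$|F(t,Y)-F(s,Y)|\le c_{\alpha,\beta}\bigl(1+\|Y\|_{\alpha;[s,t]}^\beta\bigr)|t-s|^{\alpha\beta},$$
and since $[s,t]\subseteq[t_0,t_1]$ forces $\|Y\|_{\alpha;[s,t]}\le\|Y\|_{\alpha;[t_0,t_1]}$, dividing by $|t-s|^{\alpha\beta}$ and taking the supremum over $s\neq t$ in $[t_0,t_1]$ yields \eqref{eq:Comp1} at once.

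For \eqref{eq:Comp2} I would introduce the auxiliary path $W_t:=F(t,Y)-F(t,Z)$ on $[t_0,t_1]$ and bound its two relevant norms. The supremum norm is controlled by the Lipschitz-type bound \eqref{eq:F_is_spe_lip}: for each $t\in[t_0,t_1]$ one has $[0,t]\subseteq[0,t_1]$, whence
$$\|W\|_{\infty;[t_0,t_1]}\le\tilde c_{\alpha,\beta}\bigl(\|Y-Z\|_{\alpha;[0,t_1]}+|Y_0-Z_0|\bigr)^\beta.$$
The $\alpha\beta$-Hölder seminorm is controlled by applying \eqref{eq:F_is_spec_holder} to $Y$ and to $Z$ separately and using the triangle inequality on $W_t-W_s=[F(t,Y)-F(s,Y)]-[F(t,Z)-F(s,Z)]$; invoking $\|Y\|_{\alpha;[t_0,t_1]},\|Z\|_{\alpha;[t_0,t_1]}\le R$ this gives
$$\|W\|_{\alpha\beta;[t_0,t_1]}\le 2c_{\alpha,\beta}\bigl(1+R^\beta\bigr).$$
In particular $W\in C^{\alpha\beta}([t_0,t_1])$, so geometric interpolation \eqref{geo_int2}, run with base exponent $\alpha\beta$ in place of $\alpha$, reads
$$\|W\|_{\alpha\beta\theta;[t_0,t_1]}\le 2^{1-\theta}\|W\|_{\infty;[t_0,t_1]}^{1-\theta}\|W\|_{\alpha\beta;[t_0,t_1]}^\theta.$$
Substituting the two bounds and collecting the factor $2^{1-\theta}\cdot2^{\theta}=2$ produces exactly the right-hand side of \eqref{eq:Comp2}.

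None of the steps presents a genuine obstacle; the work is essentially bookkeeping. The one point to watch is matching the interpolation exponents: the interpolation must be carried out with the ``smooth'' exponent $\alpha\beta$ (the Hölder bound, which carries the growth factor $1+R^\beta$) raised to the power $\theta$, and the ``rough'' sup-norm bound (which carries the increment $\|Y-Z\|_{\alpha}+|Y_0-Z_0|$) raised to the power $1-\theta$, so that the final exponent on the increment is $\beta(1-\theta)$ as claimed. One should also track the intervals carefully: the Hölder bounds live on $[t_0,t_1]$ whereas the increment $\|Y-Z\|_{\alpha;[0,t_1]}$ lives on $[0,t_1]$, reflecting the non-anticipating, past-dependent nature of $F$.
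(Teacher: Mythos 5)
Your proposal is correct and follows essentially the same route as the paper's own proof: \eqref{eq:Comp1} directly from Definition \ref{def:F_spcial_holder}, then for \eqref{eq:Comp2} the difference path $W_t=F(t,Y)-F(t,Z)$, the sup-norm bound via \eqref{eq:F_is_spe_lip}, the $\alpha\beta$-Hölder bound $2c_{\alpha,\beta}(1+R^{\beta})$ via \eqref{eq:Comp1}, and geometric interpolation \eqref{geo_int2}. The bookkeeping of exponents and the factor $2^{1-\theta}\cdot 2^{\theta}=2$ are exactly as in the paper.
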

\begin{proof}
For $s<t$ in $\left[t_{0},t_{1}\right]$, and $Y\in C^{\alpha}([0,t_{1}])$,
 (\ref{eq:Comp1}) follows directly from the definition of $\left(\alpha,\beta\right)$-Hölder continuous.

We prove now (\ref{eq:Comp2}). We set $I:=\left[t_{0},t_{1}\right]$.
Given $Y,Z\in C^{\alpha}([0,t_{1}])$ with $\left\lVert Y\right\rVert _{\alpha;I},\left\lVert Z\right\rVert _{\alpha;I}\le R$,
we write $W_{t}:=F\left(t,Y\right)-F\left(t,Z\right)$, $t\in I$.
Fix an arbitrary $\theta\in\left(0,1\right)$; using \eqref{geo_int2}
we obtain
\begin{equation}\label{w1}
\left\lVert W\right\rVert _{\alpha\beta\theta;I}\le2^{1-\theta}\left\lVert W\right\rVert _{\infty;I}^{1-\theta}\left\lVert W\right\rVert _{\alpha\beta;I}^{\theta},
\end{equation}
so it remains to bound $\left\lVert W\right\rVert _{\infty;I}$ and $\left\lVert W\right\rVert _{\alpha\beta;I}$.

On the one hand, since $F$ is $(\alpha,\beta)$-Hölder continuous, we have
\begin{eqnarray}
\left\lVert W\right\rVert _{\infty;I} & = & \sup_{t\in\left[t_{0},t_{1}\right]}\left|F\left(t,Y\right)-F\left(t,Z\right)\right|\nonumber \\
 & \le & \tilde{c}_{\alpha,\beta}\left(\left\lVert Y-Z\right\rVert _{\alpha,\left[0,t_{1}\right]}+\left|Y_{0}-Z_{0}\right|\right)^{\beta}.\label{w2}
\end{eqnarray}
On the other hand, using \eqref{eq:Comp1} and recalling 
$\left\lVert Y\right\rVert _{\alpha;I},\left\lVert Z\right\rVert _{\alpha;I}\le R$, it follows
\begin{eqnarray}
\left\lVert W\right\rVert _{\alpha\beta;I} & \le & \left\lVert F\left(t,Y\right)\right\rVert _{\alpha\beta;I}+\left\lVert F\left(t,Z\right)\right\rVert _{\alpha\beta;I} \nonumber \\
 & \le & c_{\alpha,\beta}\left(1+\left\lVert Y\right\rVert _{\alpha;I}^{\beta}\right)+c_{\alpha,\beta}\left(1+\left\lVert Z\right\rVert _{\alpha;I}^{\beta}\right)\nonumber\\
 & \le & 2c_{\alpha,\beta}\left(1+R^{\beta}\right). \label{w3}
\end{eqnarray}
Finally inequality \eqref{eq:Comp2} follows  substituting  \eqref{w2} and \eqref{w3} into \eqref{w1}, recalling that 
$W_{t}:=F\left(t,Y\right)-F\left(t,Z\right), t \in [0,T] $.

\end{proof}

\section{\label{sec:Existence-and-Uniqueness}The Existence Results}

In this section we present the main results of this paper. We introduce
formally the equation and its statements regarding existence of solutions.
We state a version of Schauder fixed point theorem that we are using. 
\begin{thm}
\label{thm:SCHAUDER}Let $M$ be a non-empty, closed, bounded, convex
subset of a Banach space, and suppose $S:M\rightarrow M$ is a continuous
operator which maps $M$ into a compact subset of $M$. Then M has
a fixed point.
\end{thm}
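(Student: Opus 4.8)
The plan is to reduce the infinite-dimensional statement to the finite-dimensional Brouwer fixed point theorem, which I would take as known, the bridge being the \emph{Schauder projection}. Let $K:=\overline{S(M)}$; since $S$ maps $M$ into a compact subset of $M$ and $M$ is closed, $K$ is a compact subset of $M$. First I would fix $\varepsilon>0$ and, using compactness of $K$, choose a finite $\varepsilon$-net $y_1,\ldots,y_n\in K$ so that the balls of radius $\varepsilon$ centred at the $y_i$ cover $K$. Define, for $x\in K$, the weights $\lambda_i(x):=\max\{0,\varepsilon-\|x-y_i\|\}$ and set
\[
P_\varepsilon(x):=\frac{\sum_{i=1}^n \lambda_i(x)\,y_i}{\sum_{i=1}^n \lambda_i(x)}.
\]
The denominator never vanishes on $K$ because the balls cover $K$, so $P_\varepsilon$ is continuous, takes values in $C_\varepsilon:=\conv\{y_1,\ldots,y_n\}$, and satisfies $\|P_\varepsilon(x)-x\|<\varepsilon$ for every $x\in K$ (since $\lambda_i(x)\neq 0$ forces $\|x-y_i\|<\varepsilon$). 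Because $M$ is convex and each $y_i\in K\subset M$, the set $C_\varepsilon$ is a non-empty, compact, convex subset of $M$ lying in a finite-dimensional subspace.

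Next I would invoke Brouwer. Consider the composition $S_\varepsilon:=P_\varepsilon\circ S$ restricted to $C_\varepsilon$. Since $S$ maps $C_\varepsilon\subset M$ into $K$ and $P_\varepsilon$ maps $K$ into $C_\varepsilon$, we have that $S_\varepsilon\colon C_\varepsilon\to C_\varepsilon$ is continuous on a non-empty compact convex subset of a finite-dimensional space. Brouwer's theorem then yields a fixed point $x_\varepsilon\in C_\varepsilon$ with $S_\varepsilon(x_\varepsilon)=x_\varepsilon$, whence
\[
\|x_\varepsilon-S(x_\varepsilon)\|=\|P_\varepsilon(S(x_\varepsilon))-S(x_\varepsilon)\|<\varepsilon,
\]
using the defining estimate on $P_\varepsilon$ together with $S(x_\varepsilon)\in K$.

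Finally I would pass to the limit. Taking $\varepsilon=1/k$ produces a sequence $x_k\in M$ with $\|x_k-S(x_k)\|<1/k$. Since $S(x_k)\in K$ and $K$ is compact, some subsequence $S(x_{k_j})$ converges to a point $x\in K\subset M$; because $\|x_{k_j}-S(x_{k_j})\|\to 0$, the $x_{k_j}$ converge to the same limit $x$. Continuity of $S$ gives $S(x_{k_j})\to S(x)$, and comparing the two limits of $S(x_{k_j})$ forces $S(x)=x$, which is the desired fixed point.

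The main obstacle is the clean construction of the Schauder projection and the verification of its two key properties, namely its finite-dimensional range $C_\varepsilon\subset M$ and the uniform bound $\|P_\varepsilon(x)-x\|<\varepsilon$ on $K$; once these are in hand, the remainder is a routine compactness-and-continuity limiting argument. If Brouwer's theorem were not assumed, establishing it would be the genuinely hard topological core, but here it is classical and may simply be cited.
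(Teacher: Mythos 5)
Your argument is correct and complete: the Schauder projection $P_\varepsilon$ built from a finite $\varepsilon$-net of $K=\overline{S(M)}$, the application of Brouwer's theorem to $P_\varepsilon\circ S$ on the finite-dimensional compact convex set $C_\varepsilon\subset M$, and the final compactness-and-continuity passage to the limit are all carried out properly. For comparison, the paper does not prove this statement at all; it simply cites Bonsall's lecture notes, where essentially the same classical argument (Schauder projection reducing to Brouwer) is given, so your proof supplies exactly what the citation points to. The only point worth making explicit is that you invoke Brouwer's theorem for an arbitrary non-empty compact convex subset of a finite-dimensional space rather than for a closed ball; this slightly more general form is standard (such a set is a retract of a ball containing it, or homeomorphic to a ball of the appropriate dimension), but it deserves a word if one is being scrupulous about what is "assumed known."
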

\begin{proof}
See \cite[Th. 2.2]{schauderbonsall1962lectures}. 
\end{proof}
The next two lemmas will will help us gluing Hölder functions. 
\begin{lem}
\label{lem:Gluing_holder} Let $I$ and $J$ denote two compact intervals
of $\mathbb{R}$ such that $I\cap J$ is non-empty. Let $Y:I\cup J\rightarrow U$
a path such that $Y\in C^{\alpha}\left(I\right)$ and $Y\in C^{\alpha}\left(J\right)$.
Then $Y\in C^{\alpha}\left(I\cup J\right)$ and 
\[
\left\lVert Y\right\rVert _{\alpha;I\cup J}\le2\left(\left\lVert Y\right\rVert _{\alpha;I}+\left\lVert Y\right\rVert _{\alpha;J}\right).
\]
\end{lem}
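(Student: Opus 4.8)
The plan is to estimate $\left|Y_{t}-Y_{s}\right|$ for an arbitrary ordered pair $s<t$ in $I\cup J$ by inserting an intermediate point lying in $I\cap J$, so that the two resulting increments are each controlled on a single interval where a Hölder bound is already available. Since $\left\Vert Y\right\Vert _{\alpha;I\cup J}$ is a supremum over ordered pairs, fixing $s<t$ in $I\cup J$ costs no generality, and the estimate is symmetric in $I$ and $J$.

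First I would dispose of the trivial cases: if $\left[s,t\right]\subseteq I$ then $\left|Y_{t}-Y_{s}\right|\le\left\Vert Y\right\Vert _{\alpha;I}\left|t-s\right|^{\alpha}$, and symmetrically when $\left[s,t\right]\subseteq J$; in both situations the desired bound holds. So from now on assume $\left[s,t\right]$ lies in neither $I$ nor $J$.

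The crux of the argument --- and the step I expect to be the only real obstacle --- is to produce a point $r\in I\cap J$ with $s<r<t$. To locate it I would write $I=\left[a_{1},b_{1}\right]$, $J=\left[a_{2},b_{2}\right]$ and assume without loss of generality $a_{1}\le a_{2}$; nonemptiness of $I\cap J$ then forces $a_{2}\le b_{1}$. If $b_{2}\le b_{1}$ we would have $J\subseteq I$, hence $I\cup J=I\supseteq\left[s,t\right]$, contradicting the standing assumption; therefore $b_{2}>b_{1}$, so $I\cap J=\left[a_{2},b_{1}\right]$ and $I\cup J=\left[a_{1},b_{2}\right]$. Now $\left[s,t\right]\not\subseteq I$ gives $t>b_{1}$ and $\left[s,t\right]\not\subseteq J$ gives $s<a_{2}$, whence $s<a_{2}\le b_{1}<t$; thus $r:=a_{2}\in I\cap J$ sits strictly between $s$ and $t$. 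Moreover $\left[s,r\right]\subseteq I$ (because $a_{1}\le s<r=a_{2}\le b_{1}$) and $\left[r,t\right]\subseteq J$ (because $a_{2}=r<t\le b_{2}$).

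Finally I would glue the two pieces with the triangle inequality, using $\left|t-r\right|\le\left|t-s\right|$ and $\left|r-s\right|\le\left|t-s\right|$:
\begin{align*}
\left|Y_{t}-Y_{s}\right| & \le\left|Y_{t}-Y_{r}\right|+\left|Y_{r}-Y_{s}\right|\\
 & \le\left\Vert Y\right\Vert _{\alpha;J}\left|t-r\right|^{\alpha}+\left\Vert Y\right\Vert _{\alpha;I}\left|r-s\right|^{\alpha}\\
 & \le\left(\left\Vert Y\right\Vert _{\alpha;I}+\left\Vert Y\right\Vert _{\alpha;J}\right)\left|t-s\right|^{\alpha}.
\end{align*}
Taking the supremum over $s\ne t$ in $I\cup J$ shows both that $Y\in C^{\alpha}\left(I\cup J\right)$ and that $\left\Vert Y\right\Vert _{\alpha;I\cup J}\le\left\Vert Y\right\Vert _{\alpha;I}+\left\Vert Y\right\Vert _{\alpha;J}$. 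This is in fact sharper than the claimed bound, which then follows a fortiori; the factor $2$ in the statement is not actually needed.
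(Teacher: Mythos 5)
Your proof is correct and complete; the paper itself offers no argument for this lemma (it only cites Lemma 3 of Gubinelli's \emph{Controlling rough paths}), and your route --- splitting at a point $r=a_{2}\in I\cap J$ with $s<r<t$ and applying the triangle inequality together with $\left|t-r\right|^{\alpha},\left|r-s\right|^{\alpha}\le\left|t-s\right|^{\alpha}$ --- is exactly the standard argument behind that reference, with the case analysis locating $r$ carried out carefully and correctly. You are also right that this yields the sharper constant $1$ in place of $2$, so the bound as stated follows a fortiori.
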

\begin{proof}
See \cite[Lemma 3]{gubinelli2004controlling}. \end{proof}
\begin{lem}
\label{lem:Glueing_holder_II} Fix $\tau>0$. Let $0=:t_{0}<t_{1}<\cdots<t_{N+1}:=T$
be a partition of $\left[0,T\right]$, where every sub-interval has
length $\tau$, i.e., $t_{i}-t_{i-1}=\tau$, $\forall i=1,\ldots,N$.
Let $Y:\left[0,T\right]\rightarrow U$ such that 
\[
\left\lVert Y\right\rVert _{\alpha;t_{i},\tau}\le R,
\]
for all $i=1,\ldots,N$. Then 
\[
\left\lVert Y\right\rVert _{\alpha;\left[0,T\right]}\le4R\left(1\vee T^{1-\alpha}\tau^{\alpha-1}\right).
\]
\end{lem}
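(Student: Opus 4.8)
The plan is to estimate the ratio $\left|Y_{t}-Y_{s}\right|/\left|t-s\right|^{\alpha}$ directly for an arbitrary pair $s<t$ in $\left[0,T\right]$, rather than iterating the two-interval gluing Lemma \ref{lem:Gluing_holder}: the factor $2$ in that lemma would compound into a useless constant of order $2^{N}$ after gluing all $N+1$ pieces. Since $\left\Vert Y\right\Vert _{\alpha;\left[0,T\right]}$ is the supremum of these ratios, I would split the argument according to the two regimes $\left|t-s\right|\le\tau$ and $\left|t-s\right|>\tau$, which will produce respectively the two quantities $2R$ and $4RT^{1-\alpha}\tau^{\alpha-1}$ whose maximum is dominated by the claimed constant $4R\left(1\vee T^{1-\alpha}\tau^{\alpha-1}\right)$. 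Throughout I use the hypothesis in the form $\left\Vert Y\right\Vert _{\alpha;\left[t_{i-1},t_{i}\right]}\le R$ on each subinterval of the partition (the possibly shorter last one $\left[t_{N},T\right]$ included).

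In the short regime $\left|t-s\right|\le\tau$, the interval $\left[s,t\right]$ cannot contain a full length-$\tau$ subinterval in its interior, so it meets at most two consecutive subintervals and contains at most one partition point $t_{i}$ in its interior. If $s,t$ lie in a common subinterval the hypothesis gives $\left|Y_{t}-Y_{s}\right|\le R\left|t-s\right|^{\alpha}$ at once; otherwise I insert the single interior point $t_{i}$ and use $\left|Y_{t}-Y_{s}\right|\le\left|Y_{t}-Y_{t_{i}}\right|+\left|Y_{t_{i}}-Y_{s}\right|\le R\left|t-t_{i}\right|^{\alpha}+R\left|t_{i}-s\right|^{\alpha}\le2R\left|t-s\right|^{\alpha}$, since each of $\left|t-t_{i}\right|,\left|t_{i}-s\right|$ is at most $\left|t-s\right|$. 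Hence the ratio is bounded by $2R$ in this regime.

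In the long regime $\left|t-s\right|>\tau$, I would telescope through the partition points $t_{p}<\cdots<t_{q}$ lying strictly inside $\left(s,t\right)$; there is at least one of them because consecutive points are at distance at most $\tau<\left|t-s\right|$. Writing $\left|Y_{t}-Y_{s}\right|\le\left|Y_{t_{p}}-Y_{s}\right|+\sum_{l=p}^{q-1}\left|Y_{t_{l+1}}-Y_{t_{l}}\right|+\left|Y_{t}-Y_{t_{q}}\right|$ and bounding every increment by $R\tau^{\alpha}$ (each endpoint pair sits in a single subinterval of length at most $\tau$) gives $\left|Y_{t}-Y_{s}\right|\le\left(q-p+2\right)R\tau^{\alpha}$. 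The count is then controlled by the length: since at most one gap (the final one) can be shorter than $\tau$, one has $\left(q-p-1\right)\tau\le t_{q}-t_{p}\le t-s$, whence $q-p+2\le\left(t-s\right)/\tau+3\le4\left(t-s\right)/\tau$ using $\left|t-s\right|>\tau$. This yields $\left|Y_{t}-Y_{s}\right|\le4R\left(t-s\right)\tau^{\alpha-1}$, and converting the linear factor through $t-s=\left(t-s\right)^{1-\alpha}\left(t-s\right)^{\alpha}\le T^{1-\alpha}\left(t-s\right)^{\alpha}$ (valid since $1-\alpha\ge0$ and $t-s\le T$) bounds the ratio by $4RT^{1-\alpha}\tau^{\alpha-1}$.

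Taking the supremum over all $s<t$ combines the two regimes into $\left\Vert Y\right\Vert _{\alpha;\left[0,T\right]}\le\max\{2R,\,4RT^{1-\alpha}\tau^{\alpha-1}\}\le4R\left(1\vee T^{1-\alpha}\tau^{\alpha-1}\right)$, which is the assertion. The main obstacle is the long regime: one must pass from an estimate that is \emph{linear} in $t-s$ (the natural output of summing $q-p$ equal increments) to one that scales like $\left(t-s\right)^{\alpha}$, while counting the crossed subintervals sharply enough that the constant stays bounded by $4$ and does not degrade with $N$. The only delicate bookkeeping there is the possibly shorter final subinterval $\left[t_{N},T\right]$, which is absorbed by discarding that single short gap in the lower bound $\left(q-p-1\right)\tau\le t_{q}-t_{p}$.
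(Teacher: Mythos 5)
Your proof is correct and follows essentially the same route as the paper's: a case split on whether $\left|t-s\right|\le\tau$ or $\left|t-s\right|>\tau$, a one-point triangle inequality in the short regime, and in the long regime a telescoping sum over the partition points inside $(s,t)$ whose length is converted to $\left(t-s\right)^{\alpha}$ via $t-s\le T^{1-\alpha}\left(t-s\right)^{\alpha}$. Your bookkeeping of the number of crossed subintervals (via $\left(q-p-1\right)\tau\le t_{q}-t_{p}\le t-s$) is in fact slightly more careful than the paper's stated relation $\tau r<t-s\le\tau\left(r+1\right)$, and both land within the claimed constant $4R\left(1\vee T^{1-\alpha}\tau^{\alpha-1}\right)$.
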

\begin{proof}
Let $s,t\in\left[0,T\right]$. If $\left|t-s\right|\le\tau$ then
$s$ and $t$ belong to (at most) two consecutive interval $\left[t_{i},t_{i+1}\right]$.
Hence, $\left|Y_{t}-Y_{s}\right|\le2(R+R)\left|t-s\right|^{\alpha}$,
see Lemma \ref{lem:Gluing_holder} above. Otherwise, $t_{K}<s\le t_{K+1}<\cdots<t<t_{K+r+1}$
with $\tau r<t-s\le\tau\left(r+1\right)$ for some $r\ge 1$. Then 
\begin{align*}
\left|Y_{t}-Y_{s}\right| & \le\sum_{j=1}^{r+1}\left|Y_{t\wedge t_{K+j}}-Y_{s\vee t_{K+j-1}}\right|\\
 & \le R\sum_{j=1}^{r}\left|t\wedge t_{K+j}-s\vee t_{K+j-1}\right|^{\alpha}\\
 & \le R\tau^{\alpha}\left(r+1\right)\\
 & =R\tau^{\alpha-1}\tau (r+1)\\
 & \le 2R\tau^{\alpha-1}\tau r \\
 & \le 2R\tau^{\alpha-1}\left|t-s\right|\\
 & \le 2R\tau^{\alpha-1}T^{1-\alpha}\left|t-s\right|^{\alpha}.
\end{align*}

In conclusion $\left\lVert Y\right\rVert _{\alpha;\left[0,T\right]}\le4R\left(1\vee T^{1-\alpha}\tau^{\alpha-1}\right)$. 
\end{proof}

Now we state and prove the first existence theorem for global solutions
  in time. We insist  on the fact that our assumptions
do not imply that $F$ is bounded. This particular case
will be investigated in the subsequent Theorem \ref{T20}.
%
%
\begin{thm}
\label{thm:F_is_sublinear}
 Let $U$ and $V$ be finite dimensional linear spaces.
 Let $\alpha>\frac{1}{2}$ and $X:\left[0,T\right]\rightarrow V$
an $\alpha$-Hölder path. Let $\beta\in\left(0,1\right)$ such $\alpha\beta+\alpha>1$.
Let $F:\left[0,T\right]\times C^{\alpha'}\left(\left[0,T\right],U\right)\rightarrow L\left(V,U\right)$
for some $\alpha'<\alpha$ with the property $F$ is also $\left(\alpha',\beta\right)$-Hölder
continuous. \\
 Given an initial condition $y_{0}\in U,$ there is a solution $Y\in C^{\alpha}\left(\left[0,T\right],U\right)$
for the equation  
\begin{equation}
Y_{t}=y_{0}+\int_{0}^{t}F\left(u,Y\right)dX_{u}, \ t\in\left[0,T\right].
\label{eq:EDO}
\end{equation}
\end{thm}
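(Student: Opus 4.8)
The plan is to apply the Schauder fixed point theorem (Theorem \ref{thm:SCHAUDER}) to the solution map $S(Z) := y_0 + \int_0^\cdot F(u,Z)\,dX_u$, but to do so in the \emph{weaker} $C^{\alpha'}$ topology, in which a ball of $C^{\alpha}$ is compact, and to build the global solution by solving successively on small sub-intervals and gluing. Fix $\tau\le 1$ and a partition $0=t_0<t_1<\cdots<t_{N+1}=T$ into pieces of length $\tau$ as in Lemma \ref{lem:Glueing_holder_II}. By Lemma \ref{lem:F_is_alpha_prime_holder}, $F$ is simultaneously $(\alpha',\beta)$- and $(\alpha,\beta)$-Hölder; the exponent $\alpha$ will supply the a priori Hölder regularity (hence compactness) while $\alpha'$ will supply continuity of $S$ in the topology where that compactness holds. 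I would proceed by induction on $i$: assuming a solution has been built on $[0,t_i]$ with $\|Y\|_{\alpha;[0,t_i]}\le B_i<\infty$, I extend it to $[t_i,t_{i+1}]$. Writing $a:=Y_{t_i}$ and, for $Z\in C^{\alpha}([t_i,t_{i+1}])$ with $Z_{t_i}=a$, letting $\hat Z$ be the path on $[0,t_{i+1}]$ obtained by gluing the fixed past $Y|_{[0,t_i]}$ with $Z$, I set $S(Z)_t:=a+\int_{t_i}^t F(u,\hat Z)\,dX_u$; non-anticipativity makes this well defined, and with $\gamma:=\alpha\beta$ one has $\alpha+\gamma>1$, so Theorem \ref{thm:Young_integral} gives $S(Z)\in C^{\alpha}$.

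For the self-map property I combine \eqref{eq:Young_inequality} with \eqref{eq:Comp1} to bound $\|F(\cdot,\hat Z)\|_{\alpha\beta;t_i,\tau}\le c_{\alpha,\beta}(1+\|Z\|_{\alpha;[t_i,t_{i+1}]}^{\beta})$, while \eqref{eq:F_is_spec_holder} applied with $s=0,\ t=t_i$ bounds the left endpoint value $|F(t_i,\hat Z)|\le |F(0,\hat Z)|+c_{\alpha,\beta}(1+B_i^{\beta})T^{\alpha\beta}=:K_i$, a finite constant depending only on the past. This yields $\|S(Z)\|_{\alpha;t_i,\tau}\le A(1+R^{\beta})+A'$ whenever $\|Z\|_{\alpha;[t_i,t_{i+1}]}\le R$, with $A,A'$ finite. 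Because $\beta<1$ the right-hand side is sublinear in $R$, so some finite $R=R_{i+1}$ satisfies $A(1+R^{\beta})+A'\le R$; then $M_{R_{i+1}}:=\{Z:\ Z_{t_i}=a,\ \|Z\|_{\alpha;[t_i,t_{i+1}]}\le R_{i+1}\}$ is mapped into itself by $S$. Note that no smallness of $\tau$ is needed here: sublinearity alone closes the estimate.

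The main obstacle is reconciling compactness with continuity, and this is exactly where both exponents are used. I view $M_{R_{i+1}}$ inside the Banach space $C^{\alpha'}([t_i,t_{i+1}])$: it is convex, bounded, and closed (lower semicontinuity of $\|\cdot\|_{\alpha}$ under $C^{\alpha'}$-convergence), and since a $C^{\alpha}$-bounded family is uniformly bounded and equicontinuous, Arzelà–Ascoli (here the finite dimensionality of $U$ is essential) together with the interpolation \eqref{geo_int2} (sup-norm convergence plus a uniform $C^{\alpha}$ bound forces $C^{\alpha'}$-convergence) shows $M_{R_{i+1}}$ is compact in $C^{\alpha'}$. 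For continuity of $S$ in this topology, take $Z_n\to Z$ in $C^{\alpha'}$ inside $M_{R_{i+1}}$ and put $W_n:=F(\cdot,\hat Z_n)-F(\cdot,\hat Z)$. Since $\hat Z_n$ and $\hat Z$ coincide on $[0,t_i]$ and $F$ is non-anticipating, $W_n(t_i)=0$; the $(\alpha',\beta)$-Hölder estimate \eqref{eq:F_is_spe_lip} (via the gluing Lemma \ref{lem:Gluing_holder}) gives $\|W_n\|_{\infty}\to 0$, whereas \eqref{eq:Comp1} keeps $\|W_n\|_{\alpha\beta}$ bounded. Interpolating as in \eqref{geo_int2} yields $\|W_n\|_{\delta}\to 0$ for any $\delta\in(1-\alpha,\alpha\beta)$ (a nonempty range precisely because $\alpha+\alpha\beta>1$), and choosing such a $\delta$ I apply \eqref{eq:Young_inequality} with the admissible pair $(\alpha,\delta)$ and $W_n(t_i)=0$ to get $\|S(Z_n)-S(Z)\|_{\alpha}\to 0$, hence a fortiori convergence in $C^{\alpha'}$. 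Thus $S$ is a continuous self-map of the compact convex set $M_{R_{i+1}}$, and Theorem \ref{thm:SCHAUDER} produces a fixed point, i.e.\ a local solution with the prescribed left value.

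Finally I would glue. After $N+1$ steps I obtain $Y$ on $[0,T]$ with $\|Y\|_{\alpha;t_i,\tau}\le R:=\max_j R_j<\infty$ for every $i$, so Lemma \ref{lem:Glueing_holder_II} gives $Y\in C^{\alpha}([0,T])$. Using additivity of the Young integral over adjacent intervals and the induction hypothesis $Y_{t_i}=y_0+\int_0^{t_i}F(u,Y)\,dX_u$, the local identities $Y_t=Y_{t_i}+\int_{t_i}^t F(u,Y)\,dX_u$ telescope to \eqref{eq:EDO} on all of $[0,T]$. The one point that genuinely requires care in a full write-up is the continuity step: the naive estimate \eqref{eq:Comp2} controls $F(\cdot,Y)-F(\cdot,Z)$ only through $\|Y-Z\|_{\alpha}$, which need not vanish under $C^{\alpha'}$-convergence, so one must instead run the $\|W_n\|_{\infty}\to 0$ plus interpolation argument above, exploiting that $W_n$ vanishes at the left endpoint.
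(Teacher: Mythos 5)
Your proof is correct, and it follows the paper's overall strategy (Schauder's theorem on subintervals of length $\tau\le 1$, compactness of a Hölder ball in a weaker Hölder topology via Arzelà--Ascoli plus interpolation, then gluing), but two of your key technical steps genuinely diverge from the paper's, and in both cases your version is leaner. First, for the continuity of $S$ you work directly with the integrand difference $W_n=F(\cdot,\hat Z_n)-F(\cdot,\hat Z)$: you use that $W_n$ vanishes at the left endpoint by non-anticipation, get $\left\lVert W_n\right\rVert_{\infty}\to 0$ from \eqref{eq:F_is_spe_lip} and Lemma \ref{lem:Gluing_holder}, keep $\left\lVert W_n\right\rVert_{\alpha\beta}$ bounded by \eqref{eq:Comp1}, and interpolate down to any $\delta\in(1-\alpha,\alpha\beta)$ before applying \eqref{eq:Young_inequality} with the pair $(\alpha,\delta)$. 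The paper instead routes this through \eqref{eq:Comp2} applied at the exponents $\theta\alpha'$ and $\theta^{2}\alpha'\beta$, which forces the Step 1 reduction to an auxiliary $\tilde\alpha$ and a $\theta$ with $\alpha'\theta^{2}\beta+\alpha'\theta^{2}>1$; your argument makes that whole reduction (and the bootstrap of Step 2, since your fixed point already lives in a $C^{\alpha}$ ball) unnecessary. Second, for the invariance $S(M)\subset M$ you let the radius $R_{i}$ grow from one subinterval to the next and close the estimate by sublinearity in $R$ alone ($\beta<1$), with no smallness condition on $\tau$; the paper instead insists on a single uniform pair $(R,\tau)$ chosen via \eqref{eq:tau_condition_II} and \eqref{eq:R_cond}, which requires the more delicate bookkeeping of Steps 6 and 8 (the bound on $\left|F(t_0,Z^{\eta})\right|$ through Lemma \ref{lem:Glueing_holder_II} and the factor $5$). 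Your interval-dependent radii are harmless because the partition is finite, and they buy a noticeably shorter induction; the paper's uniform constants buy a cleaner quantitative statement ($\left\lVert Y\right\rVert_{\alpha';k\tau,\tau}\le R$ with one $R$ for all $k$) but are not needed for bare existence. The one place you rightly flag as requiring care in a full write-up --- that \eqref{eq:Comp2} alone cannot give continuity in the $C^{\alpha'}$ topology because it is phrased through $\left\lVert Y-Z\right\rVert_{\alpha}$ --- is exactly the issue the paper's $\theta$-interpolation is designed to circumvent, and your endpoint-vanishing argument is a valid alternative resolution.
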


\begin{rem}
\label{rem_sublinear} 
\begin{enumerate}
\item A more general framework of \eqref{eq:EDO} 
is a path-dependent equation with initial condition at 
$t_0$ instead of $0$, for $t \in [t_0,t_1]$ with $t_1 \in [t_0,T]. $ 
 In that case the initial condition
will be a function  $\eta\in C^{\alpha}[0,t_{0}]$.

In correspondence to this we introduce $Z^{\eta}:[0,t_{1}]\rightarrow U$ setting 
\begin{equation}
Z_{t}^{\eta}:=\begin{cases}
\eta_{t}; & t\in\left[0,t_{0}\right]\\
Z_{t}; & t\in\left[t_{0},t_{1}\right]. 
\end{cases}\label{eq:Z_eta}
\end{equation}
The equation of our interest is
\begin{equation}
\begin{cases}
Y_{t}=\eta_{t_{0}}+\int_{t_{0}}^{t}F\left(u,Y^{\eta}\right)dX_{u}, & t\in\left[0,t_{1}\right],\\
Y_{s}=\eta_{s}, & s\in[0,t_{0}].
\end{cases}\label{eq:EDObis}
\end{equation}
We remark that \eqref{eq:EDO} is a particular case of \eqref{eq:EDObis}
setting $t_0 = 0$ and $t_1 = T$.

\item The strategy employed in the proof will be first 
 to construct a solution
of \eqref{eq:EDO} replacing $T$ with a \textit{small} time $\tau$.
Then given $t_{0}$, which will be of the type $t_{0}=k\tau$ for
$k=0,1,\ldots,$, and $\eta\in C^{\alpha}([0,t_{0}])$ we will inductively
construct a solution of \eqref{eq:EDObis} with $t_{1}=t_{0}+\tau$.
\\
 For the general induction step we consider the so called \textit{solution
map}, i.e. a functional $S_{\eta}:M\rightarrow C^{\alpha'}([t_{0},t_{1}])$,
where $M$ is a suitable subset of $C^{\alpha'}([t_{0},t_{1}])$ which
will be introduced later, so that $S_{\eta}(M)\subset M$, defined
by 
\begin{equation} \label{eq:DEF_S_bounded} 
S_{\eta}\left(Z\right)_{t}:=\eta_{t_{0}}+\int_{t_{0}}^{t}F\left(u,Z^{\eta}\right)dX_{u}, \ t\in\left[t_{0},t_{0}+\tau\right]
\end{equation}
 and $Z\in M$. 
We will prove
that $S_{\eta}$ has a fixed point through Schauder's Theorem \ref{thm:SCHAUDER}),
which of course solves  (\ref{eq:EDObis}) in $\left[t_{0},t_{1}\right]$.
This would imply
the existence of a solution on the whole time interval $\left[0,T\right]$,
by patching solutions together.

\end{enumerate}
\end{rem}

\begin{proof}
\textbf{Step 1. }\emph{We can assume without loss of generality that}
\begin{equation}
\alpha'\beta+\alpha'>1,\label{eq:alpha_prime}
\end{equation}
\emph{and moreover, that there is $\theta\in\left(0,1\right)$ such that
$F$ is $\left(\alpha'\theta^{2},\beta\right)$-Hölder continuous
and} 
\begin{equation}
\alpha'\theta^{2}\beta+\alpha'\theta^{2}>1,\label{eq:alpha_prime_theta}
\end{equation}
\emph{as well}. \\
 For this we are going to fabricate constants $\tilde{\alpha}$ larger
than $\alpha'$ and $\theta$ such that $F$ is $\left(\tilde{\alpha}\theta^{2},\beta\right)$-Hölder
continuous and $(\tilde{\alpha},\theta)$ fulfill  \eqref{eq:alpha_prime}
and \eqref{eq:alpha_prime_theta} with $\alpha'$ replaced by $\tilde{\alpha}$.
\\
Indeed, since by hypothesis, $\alpha$ is strictly greater than $\frac{1}{2}$
and because of the inequality $\alpha\beta+\alpha>1$, we can first
choose $\tilde{\alpha}\in\left(\frac{1}{\beta+1},\alpha\right)\cap\left(\frac{1}{2},\alpha\right)$.
$\theta\in\left(0,1\right)$ such that $\theta^{2}\tilde{\alpha}\in\left(\frac{1}{\beta+1},\tilde{\alpha}\right)\cap\left(\frac{1}{2},\tilde{\alpha}\right)$,
which is possible by a similar reasoning. Now, we have $\alpha'<\theta^{2}\tilde{\alpha}<\tilde{\alpha}<\alpha$
so Lemma \ref{lem:F_is_alpha_prime_holder} guarantees that $F$ is
$\left(\tilde{\alpha}\theta^{2},\beta\right)$-Hölder continuous,
with $\tilde{\alpha}$ and $\theta$ fulfilling the inequalities  \eqref{eq:alpha_prime}
and \eqref{eq:alpha_prime_theta} with $\alpha'$ replaced with $\tilde{\alpha}$.

Morally this step consists in restricting the domain of the vector field $F$ into a suitable smaller set $C^{\tilde{\alpha}}$, i.e.,  $C^{\alpha} \subset C^{\tilde{\alpha}} \subset C^{\alpha'}$. The choice of a suitable $\tilde{\alpha}$ does not play any role regarding the space where the solution $Y$ lives, as we can see in the next step. 

\textbf{Step 2.} \emph{Looking for a solution of \eqref{eq:EDObis}
in $C^{\alpha}([t_{0},t_{1}])$, it is enough to show that there is
a solution $Y\in C^{\alpha'}([t_{0},t_{1}])$. }

Indeed, if $Y\in C^{\alpha'}([t_{0},t_{1}])$ solves  
(\ref{eq:EDObis}), then by   (\ref{eq:Young_inequality_general})
in Theorem \ref{thm:Young_integral} and   (\ref{eq:F_is_spec_holder}),
it follows 
\begin{align*}
\left|Y_{t}-Y_{s}\right| & =\left|\int_{s}^{t}F\left(u,Y\right)dX_{u}\right|\\
 & \le k_{\alpha'\beta+\alpha}c\left(1+\left\lVert Y\right\rVert _{\alpha';\left[s,t\right]}^{\beta}\right)\left\lVert X\right\rVert _{\alpha;\left[0,T\right]}\left|t-s\right|^{\alpha'\beta+\alpha}+\left|F\left(s,Y\right)\right|\left\lVert X\right\rVert _{\alpha;\left[0,T\right]}\left|t-s\right|^{\alpha}\\
 & \le\left\{ k_{\alpha'\beta+\alpha}c\left(1+\left\lVert Y\right\rVert _{\alpha';\left[0,t_{1}\right]}^{\beta}\right)T^{\alpha'\beta}+\left\lVert F\left(\centerdot,Y\right)\right\rVert _{\infty}\right\} \left\lVert X\right\rVert _{\alpha;\left[0,T\right]}\left|t-s\right|^{\alpha}, 
\end{align*}
where $c = c_{\alpha', \beta}$ as defined in \eqref{eq:F_is_spe_lip}. 

\textbf{Step 3.} \emph{Discussion about set the 
 $M\subset C^{\alpha'}([t_{0},t_{0}+\tau])$,
anticipated in the Remark \ref{rem_sublinear} item 2.} $M$ will be of
the type 
\begin{equation}
M:=M_{t_{0},\tau,R,a}^{\alpha'}:=\left\{ Z\in C^{\alpha'}([t_{0},t_{0}+\tau])\ | \ Z_{t_{0}}=a,\mbox{ and }\left\Vert Z\right\Vert _{\alpha';t_{0},\tau}\le R\right\} ,\label{eq:DEF_M}
\end{equation}
for fixed $R,\tau>0$ and $a\in U$. 
We will indeed set $a=\eta_{t_{0}}\in U$, and $R,\tau$ will be suitable parameters,
see  (\ref{eq:tau_condition_II}) and \eqref{eq:R_cond}, in order
to guarantee 
that $S_{\eta}\left(M_{t_{0},\tau,R,\eta_{t_{0}}}^{\alpha'}\right)\subset M_{t_{0},\tau,R,\eta_{t_{0}}}^{\alpha'}$.

\textbf{Step 4.} \emph{Let $R>0$, $t_{0}\in[0,T)$, $\tau\in(0,1]$,
$a\in U$, which will be arbitrary in this step, the set $M_{t_{0},\tau,R,a}^{\alpha'}$
is compact in $C^{\theta\alpha'}$-topology.} This is a standard result,
however though we present its proof for the sake of completeness.

We recall the set $M_{t_{0},\tau,R,a}^{\alpha'}$ is a subset of 
$C^{\theta\alpha'}([t_{0},t_{0}+\tau])$
with a fixed initial condition, hence $C^{\theta\alpha'}$-topology
in $M_{t_{0},\tau,R,a}^{\alpha'}$ is induced by $\left\Vert \centerdot\right\Vert _{\alpha'\theta;t_{0},\tau}$.
Now we prove the claim, let $(Z^{n})$ be a sequence in $M_{t_{0},\tau,R,a}^{\alpha'}$,
$n\in\mathbb{N}$. This is an equicontinuous family in $[t_0, t_0 +\tau]$, since $\left|Z_{t}^{n}-Z_{s}^{n}\right|\le R\left|t-s\right|^{\alpha'},\ s,t\in[t_{0},t_{0}+\tau]$
for each $n$. Also, the sequence is uniformly bounded since $\left|Z_{t}^{n}\right|\le\left|Z_{0}^{n}\right|+R\left|t-0\right|^{\alpha'}=\left|a\right|+R\left|t\right|^{\alpha'}$.
Hence, by the classical Arzelà-Ascoli Theorem, there is $Z\in C([t_0,t_0+\tau])$
such that, for a subsequence (also denoted by $Z^{n}$), $\left\Vert Z^{n}-Z\right\Vert _{\infty;t_{0},t_{0}+\tau}\rightarrow0$,
as $n\rightarrow\infty$. So, in particular $Z_{t_{0}}=a$, on the
one hand. On the other hand, it is easy to see that $\left\Vert Z\right\Vert _{\alpha';t_{0},\tau}\le R$,
since 
\begin{align*}
\left|Z_{t}-Z_{s}\right| & =\lim_{n\rightarrow\infty}\left|Z_{t}^{n}-Z_{s}^{n}\right|\\
 & \le\lim_{n\rightarrow\infty}R\left|t-s\right|^{\alpha'}.
\end{align*}
Therefore $Z\in M_{t_{0},\tau,R,a}^{\alpha'}$.

In order to show $\left\Vert Z^{n}-Z\right\Vert _{\theta\alpha';t_{0},\tau}\underset{n\rightarrow\infty}{\rightarrow}0$,
we use the geometric interpolation, see Remark 
\ref{rem:Geometric_interpolation}, so
\begin{align*}
\left\Vert Z^{n}-Z\right\Vert _{\theta\alpha'} & \le2^{1-\theta}\left\Vert Z^{n}-Z\right\Vert _{\infty;t_{0},\tau}^{1-\theta}\left\Vert Z^{n}-Z\right\Vert _{\alpha';t_{0},\tau}^{\theta}\\
 & \le2^{1-\theta}\left\Vert Z^{n}-Z\right\Vert _{\infty;t_{0},\tau}^{1-\theta}\left(\left\lVert Z^{n}\right\rVert _{\alpha';t_{0},\tau}+\left\lVert Z\right\rVert _{\alpha';t_{0},\tau}\right)^{\theta}\\
 & \le2^{1-\theta}\left\Vert Z^{n}-Z\right\Vert _{\infty;t_{0},\tau}^{1-\theta}\left(2R\right)^{\theta}
\end{align*}
and we observe that the right-hand side converges to zero as $n$ goes
to $\infty$.

\textbf{Step 5.}\emph{ Let $R>0$, $t_{0}\in[0,T)$, $\tau\in(0,1]$,
$a\in U$ and $\eta\in C^{\alpha}([t_{0},t_{0}+\tau])$,
which will be arbitrary in this step. Let $\theta$ as introduced
in Step 1. Then the map $S_{\eta}:M_{t_{0},\tau,R,a}^{\alpha'}\rightarrow
 C^{\alpha'}([t_{0},t_{0}+\tau])$
is continuous under the $C^{\alpha'\theta}([t_{0},t_{0}+\tau])$-topology. We recall that $\alpha'\theta^{2}\beta+\alpha'\theta^{2}>1$, see  \eqref{eq:alpha_prime_theta}.}
%
%
Since $M_{t_{0},\tau,R,a}^{\alpha'}$ is compact, it is a closed subset
of $C^{\alpha'\theta}([t_{0},t_{0}+\tau])$. 

Fix an arbitrary $W\in M_{t_{0},\tau,R,a}^{\alpha'}$. We will show
$\left\Vert S_{\eta}\left(Z\right)-S_{\eta}\left(W\right)\right\Vert _{\theta\alpha';t_{0},\tau}\rightarrow0$
as $\left\Vert Z-W\right\Vert _{\theta\alpha';t_{0},\tau}\rightarrow0$,
$Z\in M_{t_{0},\tau,R,a}^{\alpha'}$.

We use now Young integral inequality   (\ref{eq:Young_inequality})
with $\alpha$ replaced with $\alpha'\theta$ and $\gamma$ replaced
with $\alpha'\theta^{2}\beta$. We observe that the sum 
$\mu :=\alpha'\theta^{2}\beta+\alpha'\theta^{2}$ which is strictly larger
than $1$ so Theorem \ref{thm:Young_integral} can be applied and
by   (\ref{eq:Young_inequality}) 
\begin{align}
\left\Vert S_{\eta}\left(Z\right)-S_{\eta}\left(W\right)\right\Vert _{\theta\alpha';t_{0},\tau} & =\left\Vert \int_{t_{0}}^{\centerdot}F\left(u,Z^{\eta}\right)-F\left(u,W^{\eta}\right)dX_{u}\right\Vert _{\theta\alpha';t_{0},\tau}\nonumber \\
 & \le\left(k_{\mu}+1\right)\left\Vert X\right\Vert _{\theta\alpha';t_{0},\tau}\left\Vert F\left(\centerdot,Z^{\eta}\right)-F\left(\centerdot,W^{\eta}\right)\right\Vert _{\theta^{2}\alpha'\beta;t_{0},\tau}.\label{eq:S4}
\end{align}
We recall that $F$ is $(\alpha'\theta^{2},\beta)$-Hölder continuous,
see Step 1.

From Proposition \ref{prop:FoY_is_continuous}, see  \eqref{eq:Comp2},
using $\alpha'\theta$ (and $t_{0}+\tau$) instead of $\alpha$ (and
$t_{1}$), it follows that 
\begin{align}
\left\Vert F\left(\centerdot,Z^{\eta}\right)-F\left(\centerdot,W^{\eta}\right)\right\Vert _{\theta^{2}\alpha'\beta;t_{0},\tau} & \le2\tilde{c}_{\alpha'\theta^{2},\beta}^{1-\theta}c_{\alpha'\theta^{2},\beta}^{\theta}\left(1+R^{\beta}\right)^{\theta}\left\Vert Z^{\eta}-W^{\eta}\right\Vert _{\theta\alpha';0,t_{0}+\tau}^{\beta\left(1-\theta\right)}\nonumber \\
 & =2\tilde{c}_{\alpha'\theta^{2},\beta}^{1-\theta}c_{\alpha'\theta^{2},\beta}^{\theta}\left(1+R^{\beta}\right)^{\theta}\left\Vert Z-W\right\Vert _{\theta\alpha';t_{0},\tau}^{\beta\left(1-\theta\right)}.\label{eq:S5}
\end{align}
From   (\ref{eq:S4}) and (\ref{eq:S5}) we conclude that $S_{\eta}$
is continuous with respect to $C^{\alpha'\theta}$-topology.

\textbf{Step 6.} \emph{We prove now that $S_{\eta}\left(M_{t_{0},\tau,R,y_{0}}^{\alpha'}\right)\subset M_{t_{0},\tau,R,y_{0}}^{\alpha'}$
in the case when $t_{0}=0$, with $\eta:\{0\}\rightarrow U$, $\eta_{0}=y_{0}$
for any $y_{0}\in U$} \emph{and suitable $R,\tau>0$ introduced below}.
We will extend this property at Step 8. for $t_{0}=N\tau$, $N=1,2,\ldots$.

We set 
\begin{equation} \label{EStep6} 
K:=\left(k_{\alpha'\beta+\alpha'}+1\right)\left\Vert X\right\Vert _{\alpha;0,T}2c_{F}\left(1+T^{\alpha'\beta}\right)
\end{equation}
and $c_{F}:=\max\left\{ \left|F\left(0,0\right)\right|;c_{\alpha',\beta};\tilde{c}_{\alpha',\beta}\right\} $.
We can assume $K>0$, otherwise either $\left\Vert X\right\Vert _{\alpha;0,T}=0$
or $F=0$, thus the constant function $Y_{t}:=y_{0}$ solves 
(\ref{eq:EDO}). Let $\varepsilon\in\left(0,\frac{K}{2}\right)$ be
fixed and $\tau$ is defined by 
\begin{equation}
\tau:=\left(\frac{\varepsilon}{K}\right)^{\frac{1}{\alpha-\alpha'}},\label{ETau}
\end{equation}
so that, $0<\tau<1$ and 
\begin{equation}
K\tau^{\alpha-\alpha'}=\varepsilon.\label{eq:tau_condition_II}
\end{equation}

Let $R>0$ big enough such that 
\begin{align}
\varepsilon\left(1+5\left(1\vee T^{1-\alpha'}\tau^{\alpha'-1}\right)^{\beta}R^{\beta}\right) & \le R,\label{eq:R_cond}\\
\left|y_{0}\right| & \le R, \label{eq:R_cond1}
\end{align}
which is always possible since $\beta<1$. Indeed, given a function
$g:\mathbb{R}\rightarrow\R$ defined by $g(R)=c+dR^{\beta}$, $c,d>0$,
the limit of $\frac{g(R)}{R}$ when $R\rightarrow\infty$ is zero.

From now on in this step we set $S:=S_{\eta}$. We prove now that
$S\left(M_{0,\tau,R,y_{0}}^{\alpha'}\right)\subset M_{0,\tau,R,y_{0}}^{\alpha'}$.
Indeed, given $Z\in M_{0,\tau,R,y_{0}}^{\alpha'}$, from Young integral
inequality  (\ref{eq:Young_inequality}), it follows that 
\begin{align}
\left\Vert S\left(Z\right)\right\Vert _{\alpha';0,\tau} & \le\left(k_{\alpha'\beta+\alpha'}+1\right)\left\Vert X\right\Vert _{\alpha';0,\tau}\left(\left\Vert F\left(\centerdot,Z\right)\right\Vert _{\alpha'\beta;0,\tau}+\left|F\left(0,Z\right)\right|\right).\label{eq:S1}
\end{align}

Since $F$ is $\left(\alpha',\beta\right)$-Hölder continuous, by
Proposition \ref{prop:FoY_is_continuous} it follows that 
\begin{align}
\left\Vert F\left(\centerdot,Z\right)\right\Vert _{\alpha'\beta;0,\tau} & \le c_{\alpha',\beta}\left(1+\left\Vert Z\right\Vert _{\alpha';0,\tau}^{\beta}\right).\label{eq:S2}
\end{align}
Moreover by   (\ref{eq:F_is_spe_lip}), it also holds that 
\begin{align}
\left|F\left(0,Z\right)\right| & \le\left|F\left(0,0\right)\right|+\tilde{c}_{\alpha',\beta}\left|Z_{0}-0\right|^{\beta}\nonumber \\
 & =\left|F\left(0,0\right)\right|+\tilde{c}_{\alpha',\beta}\left|y_{0}\right|^{\beta}.\label{eq:S3}
\end{align}
Plugging   (\ref{eq:S2}),(\ref{eq:S3}) into   (\ref{eq:S1}),
using $\left\Vert X\right\Vert _{\alpha';0,\tau}\le\left\Vert X\right\Vert _{\alpha;0,T}\tau^{\alpha-\alpha'}$
and $\left|y_{0}\right|,\left\Vert Z\right\Vert _{\alpha';0,\tau}\le R$,
also recalling $c_{F}=\max\left\{ \left|F\left(0,0\right)\right|;c_{\alpha',\beta};\tilde{c}_{\alpha',\beta}\right\} $,
definitions from $\tau$ and $R$, (see   (\ref{eq:tau_condition_II}),
(\ref{eq:R_cond})), we have
\begin{align*}
\left\Vert S\left(Z\right)\right\Vert _{\alpha';0,\tau} & \le\left(k_{\alpha'\beta+\alpha'}+1\right)\left\Vert X\right\Vert _{\alpha;0,T}\tau^{\alpha-\alpha'}\left(c\left(1+R^{\beta}\right)+\left|F\left(0,0\right)\right|+\tilde{c}R^{\beta}\right)\\
 & \le\left(k_{\alpha'\beta+\alpha'}+1\right)\left\Vert X\right\Vert _{\alpha;0,T}\tau^{\alpha-\alpha'}\left(2c_{F}\left(1+R^{\beta}\right)\right)\\
 & \le K\tau^{\alpha-\alpha'}\left(1+R^{\beta}\right)\\
 & =\varepsilon\left(1+R^{\beta}\right)\\
 & \le R.
\end{align*}
This proves Step 6.

\medskip
Let $R > 0$ as in \eqref{eq:R_cond} and \eqref{eq:R_cond1} together
with $\tau$ selected in \eqref{ETau} until the end of the proof.

\textbf{Step 7.} \emph{There is a solution $Y\in C^{\alpha}([0,\tau])$
for   (\ref{eq:EDO}) replacing $T$ with $\tau$,
with $Y_{0}=y_{0}$ and $\left\Vert Y\right\Vert _{\alpha';0,\tau}\le R$.
This constitutes the first stage of a statement which will be proved
by induction in Step 9. below.} \\
 This simply follows from Steps 4., 5., 6. which allow us to use Theorem
\ref{thm:SCHAUDER} and finally Step 2.

From Step 6. the map
 $S:M_{0,\tau,R,y_{0}}^{\alpha'}\rightarrow M_{0,\tau,R,y_{0}}^{\alpha'}$
is well-defined and Step 5. shows us it is continuous under $C^{\alpha'\theta}$-topology.
Since $M_{0,\tau,R,y_{0}}^{\alpha'}$ is compact under $C^{\alpha'\theta}$-topology,
see Step 4., Schauder's Theorem \ref{thm:SCHAUDER} claims that there
is a fixed point for the map $S$, denoted by $Y\in M_{0,\tau,R,a}^{\alpha'}$.
In other words, there is $Y\in M_{0,\tau,R,a}^{\alpha'}$, such that
\[
Y_{t}=S\left(Y\right)_{t}=y_{0}+\int_{0}^{t}F\left(u,Y\right)dX_{u}, \ t\in\left[0,\tau\right].
\]
Finally, from Step 2., we conclude that
$Y\in C^{\alpha}\left[0,\tau\right]$.

\textbf{Step 8.} \emph{Now we prove the general statement announced
in step 6. Let $t_{0}=N\tau$ for some $N=1,2,\ldots$. Assume that
 $\eta\in C^{\alpha}([0,t_{0}])$
such that $\left\Vert \eta\right\Vert _{\alpha';k\tau,\tau}\le R$
for $k=0,1,\ldots,N-1$, we have
\begin{equation}
S_{\eta}\left(M_{t_{0},\tau,R,\eta_{t_{0}}}^{\alpha'}\right)\subset M_{t_{0},\tau,R,\eta_{t_{0}}}^{\alpha'}.
\label{EStability}
\end{equation}
} Indeed, let $Z\in M_{t_{0},\tau,R,\eta_{t_{0}}}^{\alpha'}$. From
Young integral inequality  (\ref{eq:Young_inequality}) using $\alpha'\beta$
(and $\alpha'$) instead of $\delta$ (and $\alpha$), 
\begin{align}\label{eq:Sgen}
\left\Vert S_{\eta}\left(Z\right)\right\Vert _{\alpha';t_{0},\tau} & \le\left(k_{\alpha'\beta+\alpha'}+1\right)\left\Vert X\right\Vert _{\alpha';t_{0},\tau}\left\{ \left\Vert F\left(\centerdot,Z^{\eta}\right)\right\Vert _{\alpha'\beta;t_{0},\tau}+\left|F\left(t_{0},Z^{\eta}\right)\right|\right\} .
\end{align}
Since $F$ is $\left(\alpha',\beta\right)$-Hölder continuous, by
  (\ref{eq:Comp1}) from Proposition \ref{prop:FoY_is_continuous}
with $\alpha'$ instead of $\alpha$
and noting that $\left\Vert Z\right\Vert _{\alpha';t_0,\tau}\le R$
it follows 
\begin{align}
\left\Vert F\left(\centerdot,Z^{\eta}\right)\right\Vert _{\alpha'\beta;t_{0},\tau} & \le c_{\alpha',\beta}\left(1+\left\Vert Z^{\eta}\right\Vert _{\alpha';t_{0},\tau}^{\beta}\right)\nonumber \\
 & \le c_{\alpha',\beta}\left(1+\left\Vert Z\right\Vert _{\alpha';t_{0},\tau}^{\beta}\right)\nonumber \\
 & \le c_{\alpha',\beta}\left(1+R^{\beta}\right).\label{eq:Sgen-1}
\end{align}
Regarding $\left|F\left(t_{0},Z^{\eta}\right)\right|$, we split it
as 
\begin{align}
\left|F\left(t_{0},Z^{\eta}\right)\right| & =\left|F\left(t_{0},\eta\right)\right|\nonumber \\
 & \le\left|F\left(t_{0},\eta\right)-F\left(0,\eta\right)\right|+\vert F\left(0,\eta\right)\vert\nonumber \\
 & =:A+B.\label{eq:Sgen-2}
\end{align}
On the one hand, by   (\ref{eq:Comp1}) from Proposition \ref{prop:FoY_is_continuous}, 
Lemma \ref{lem:Glueing_holder_II} together with hypothesis $\left\Vert \eta\right\Vert _{\alpha';k\tau,\tau}\le R$,
recalling again that $c_{F}=\max\left\{ \left|F\left(0,0\right)\right|;c_{\alpha',\beta};\tilde{c}_{\alpha',\beta}\right\} $
and also that $t_{0}=N\tau\le T$ and $4^{\beta}<5$, it follows that
\begin{align}
A & =\left|F\left(N\tau,\eta\right)-F\left(0,\eta\right)\right|\nonumber \\
 & \le c_{\alpha',\beta}\left(1+\left\Vert \eta\right\Vert _{\alpha';0,N\tau}^{\beta}\right)\left(N\tau\right)^{\alpha'\beta}\nonumber \\
 & \le c_{\alpha',\beta}\left(1+\left(4R\left(1\vee T^{1-\alpha'}\tau^{\alpha'-1}\right)\right)^{\beta}\right) T^{\alpha'\beta}\nonumber \\
 & \le c_{F}T^{\alpha'\beta}\left(1+5R^{\beta}\left(1\vee T^{1-\alpha'}\tau^{\alpha'-1}\right)^{\beta}\right).\label{eq:Sgen-2a}
\end{align}
%
%

On the other hand, since $F$ is $\left(\alpha',\beta\right)$-Hölder continuous using   $5^{\beta}\le5$ and the hypothesis $\left\Vert \eta\right\Vert _{\alpha';k\tau,\tau}\le R$, $\left|y_{0}\right|\le R$ it follows that

\begin{align}
B & \le\left|F\left(0,0\right)\right|+\left|F\left(0,\eta\right)-F\left(0,0\right)\right|\nonumber \\
 & \le F\left(0,0\right)+\tilde{c}_{\alpha',\beta}\left(\left\Vert \eta-0\right\Vert _{\alpha';0,N\tau}+\left|\eta_{0}-0\right|\right)^{\beta}\nonumber \\
 & \le c_{F}\left(1+\left(\left\Vert \eta\right\Vert _{\alpha';0,N\tau}+\left|y_{0}\right|\right)^{\beta}\right)\nonumber \\
 & \le c_{F}\left(1+\left(4R\left(1\vee T^{1-\alpha'}\tau^{\alpha'-1}\right)+R\right)^{\beta}\right)\nonumber \\
 & \le c_{F}\left(1+\left(5R\left(1\vee T^{1-\alpha'}\tau^{\alpha'-1}\right)\right)^{\beta}\right)\nonumber \\
 & \le c_{F}\left(1+5R^{\beta}\left(1\vee T^{1-\alpha'}\tau^{\alpha'-1}\right)^{\beta}\right),\label{eq:Sgen-2b}
\end{align}
where we have used Lemma \ref{lem:Glueing_holder_II} in the fourth inequality.
Hence, from   (\ref{eq:Sgen-2a}) and (\ref{eq:Sgen-2b}) we conclude
\begin{equation}
\left|F\left(t_{0},Z^{\eta}\right)\right|\le c_{F}\left(1+T^{\alpha'\beta}\right)\left(1+5R^{\beta}\left(1\vee T^{1-\alpha'}\tau^{\alpha'-1}\right)^{\beta}\right).\label{eq:Sgen-3}
\end{equation}

Now, plugging   (\ref{eq:Sgen-1}) and (\ref{eq:Sgen-3}) into  
(\ref{eq:Sgen}), using $\left\Vert X\right\Vert _{\alpha';t_{0},\tau}\le\left\Vert X\right\Vert _{\alpha;0,T}\tau^{\alpha-\alpha'}$ and 
recalling the definitions of $\tau$ and $R$ given in   (\ref{eq:tau_condition_II}),
(\ref{eq:R_cond}), it yields 
\begin{align*}
\left\Vert S_{\eta}\left(Z\right)_{t}\right\Vert _{\alpha';0,\tau} & \le\left(k_{\alpha'\beta+\alpha'}+1\right)\left\Vert X\right\Vert _{\alpha;0,T}\tau^{\alpha-\alpha'}\left\{ 2c_{F}\left(1+T^{\alpha'\beta}\right)\left(1+5R^{\beta}\left(1\vee T^{1-\alpha'}\tau^{\alpha'-1}\right)^{\beta}\right)\right\} \\
 & =\varepsilon\left(1+5R^{\beta}\left(1\vee T^{1-\alpha'}\tau^{\alpha'-1}\right)^{\beta}\right) \le R.
\end{align*}
This proves   \eqref{EStability}.

\textbf{Step 9.} \emph{There is a solution
 $Y\in C^{\alpha}([0,(N+1)\tau])$
for the   \eqref{eq:EDO} replacing $T$ with $(N+1)\tau$, with
$Y_{0}=y_{0}$ and $\left\Vert Y\right\Vert _{\alpha';k\tau,\tau}\le R$,
each $k=0,1,\ldots,N$. This constitutes the induction stage which
we announced in Step 7.}

Indeed, the case $N=0$ was proved in Step 7. Now, assume there is
a solution $\eta\in C^{\alpha}([0,N\tau])$ (replacing $T$
with $N\tau$) of \eqref{eq:EDO} with $\left\Vert \eta\right\Vert _{\alpha';k\tau,\tau}\le R$,
each $k=0,1,\ldots,N-1$. The solution $\eta$ fulfills the conditions
of the Step 8. with $t_{0}=N\tau$, hence $S_{\eta}\left(M_{t_{0},\tau,R,\eta_{t_{0}}}^{\alpha'}\right)\subset M_{t_{0},\tau,R,\eta_{t_{0}}}^{\alpha'}.$
Reasoning as at Step 7., the map $S_{\eta}:M_{t_{0},\tau,R,\eta_{t_{0}}}^{\alpha'}\rightarrow M_{t_{0},\tau,R,\eta_{t_{0}}}^{\alpha'}$
has a fixed point denoted by $W$, which in particular solves  
\eqref{eq:EDObis}: 
\begin{equation}
W_{t}=S_{\eta}\left(W\right)_{t}=\eta_{N\tau}+\int_{N\tau}^{t}F\left(u,W^{\eta}\right)dX_{u},\quad t\in\left[N\tau,\left(N+1\right)\tau\right],\label{eq:W}
\end{equation}
where we remind that the notation $W^{\eta}$ was introduced in  
\eqref{eq:Z_eta}.
%

Finally, we define $Y:=W^{\eta}\in C^{\alpha}([0,(N+1)\tau])$
which trivially extends $\eta$. We show below that $Y$ solves equation
  \eqref{eq:EDO}. Indeed, on the one hand for each $t\in\left[0,N\tau\right],$  recalling $F\left(u,\eta\right)=F\left(u,Y\right)$
for $u\in\left[0,N\tau\right]$ and that $\eta$ is a solution in
the interval $\left[0,N\tau\right]$, we have
\begin{align*}
Y_{t} & =W_{t}^{\eta}
  =\eta_{t}\\
 & =y_{0}+\int_{0}^{t}F\left(u,\eta\right)dX_{u}\\
 & =y_{0}+\int_{0}^{t}F\left(u,Y\right)dX_{u}.
\end{align*}
On the other hand, arguing as above and using   \eqref{eq:W}, we have, for $t\in\left[N\tau,\left(N+1\right)\tau\right]$,
\begin{align*}
Y_{t} & =W_{t}^{\eta}  =W_{t}\\
 & =\eta_{N\tau}+\int_{N\tau}^{t}F\left(u,W^{\eta}\right)dX_{u}\\
 & =y_{0}+\int_{0}^{N\tau}F\left(u,\eta\right)dX_{u}+\int_{N\tau}^{t}F\left(u,W^{\eta}\right)dX_{u}\\
 & =y_{0}+\int_{0}^{N\tau}F\left(u,Y\right)dX_{u}+\int_{N\tau}^{t}F\left(u,Y\right)dX_{u}\\
 & =y_{0}+\int_{0}^{t}F\left(u,Y\right)dX_{u}.
\end{align*}
This concludes that $Y$ is a solution to   \eqref{eq:EDO}
on the interval $\left[0,N\tau+\tau\right]$. Moreover, $\left\Vert Y\right\Vert _{\alpha';k\tau,\tau}=\left\Vert W^{\eta}\right\Vert _{\alpha';k\tau,\tau}\le R$
for $k=0,1,\ldots,N$. Indeed for $k=0,1,\ldots,N-1$, this holds by assumption
and for $k=N$ it comes from   \eqref{EStability}. Hence this establishes
the induction step and it concludes Step 9.
\end{proof}
The theorem below shows that when $F$ is bounded the coefficient
$\beta$ is also allowed to be $1$. We remark that in Theorem 
\ref{thm:F_is_sublinear} we have required that $\beta<1$. 
\begin{thm} \label{T20}
 Let $U$ and $V$ be finite dimensional linear spaces.
Let $\alpha,\beta\in(0,1]$ such that $\alpha>\frac{1}{2}$ and $\alpha\beta+\alpha>1$.
Let $X:\left[0,T\right]\rightarrow V$ be an $\alpha$-Hölder path
and $F:\left[0,T\right]\times C^{\alpha'}\left(\left[0,T\right],U\right)\rightarrow L\left(V,U\right)$
be a bounded and $\left(\alpha',\beta\right)$-Hölder continuous vector
field for some $\alpha'<\alpha$. Given an initial condition $y_{0}\in U$,
 there is a solution $Y\in C^{\alpha}\left(\left[0,T\right],U\right)$
for the equation 
\begin{equation}
Y_{t}=y_{0}+\int_{0}^{t}F\left(u,Y\right)dX_{u}, \
t\in\left[0,T\right].
\label{eq:EDO-1}
\end{equation}
 \end{thm}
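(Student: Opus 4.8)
The plan is to follow the scheme of the proof of Theorem \ref{thm:F_is_sublinear} step by step, modifying only the invariance estimate. Concretely, I would keep unchanged: the reduction (Step 1, via Lemma \ref{lem:F_is_alpha_prime_holder}) to a convenient exponent $\tilde\alpha\in(\tfrac12,\alpha)$ and a $\theta\in(0,1)$ for which $F$ is $(\tilde\alpha\theta^2,\beta)$-Hölder with $\tilde\alpha\theta^2\beta+\tilde\alpha\theta^2>1$; the remark (Step 2) that it suffices to produce a solution in $C^{\alpha'}$ and then bootstrap to $C^{\alpha}$; the compactness of $M_{t_0,\tau,R,a}^{\alpha'}$ in the $C^{\alpha'\theta}$-topology (Step 4); the continuity of $S_\eta$ in that topology (Step 5); and the gluing/induction of Steps 7--9. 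I first note that none of these uses $\beta<1$: the reduction of Step 1 goes through for every $\beta\in(0,1]$, since it only invokes the hypotheses $\alpha>\tfrac12$ and $\alpha\beta+\alpha>1$ (equivalently $\alpha>\tfrac1{\beta+1}$) to pick $\tilde\alpha\in(\tfrac1{\beta+1},\alpha)\cap(\tfrac12,\alpha)$, and the continuity estimate \eqref{eq:Comp2} carries the exponent $\beta(1-\theta)$, still strictly positive at $\beta=1$. Hence these steps transfer verbatim.

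The single step that genuinely differs---and the one I expect to be the crux---is the invariance $S_\eta\big(M_{t_0,\tau,R,\eta_{t_0}}^{\alpha'}\big)\subset M_{t_0,\tau,R,\eta_{t_0}}^{\alpha'}$, because the choice of $R$ in Theorem \ref{thm:F_is_sublinear} rested on the sublinear limit $g(R)/R\to0$ for $g(R)=c+dR^\beta$, which fails precisely at $\beta=1$. Here I would replace that argument by boundedness. Writing $M=M_{t_0,\tau,R,\eta_{t_0}}^{\alpha'}$ and taking $Z\in M$, Young's inequality \eqref{eq:Young_inequality} (together with $\left\Vert X\right\Vert_{\alpha';t_0,\tau}\le\left\Vert X\right\Vert_{\alpha;0,T}\tau^{\alpha-\alpha'}$), exactly as in \eqref{eq:Sgen}, gives
\[
\left\Vert S_\eta(Z)\right\Vert_{\alpha';t_0,\tau}\le(k_{\alpha'\beta+\alpha'}+1)\left\Vert X\right\Vert_{\alpha;0,T}\tau^{\alpha-\alpha'}\Big(\left\Vert F(\centerdot,Z^\eta)\right\Vert_{\alpha'\beta;t_0,\tau}+\left|F(t_0,Z^\eta)\right|\Big).
\]
By \eqref{eq:Comp1} the first bracketed term is at most $c_{\alpha',\beta}(1+R^\beta)$, exactly as in \eqref{eq:Sgen-1}; the decisive simplification is that the second term is now controlled uniformly by $\left|F(t_0,Z^\eta)\right|=\left|F(t_0,\eta)\right|\le\left\Vert F\right\Vert_\infty$. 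Thus boundedness replaces the delicate estimate \eqref{eq:Sgen-3}, which in the sublinear proof required Lemma \ref{lem:Glueing_holder_II} to track the accumulated past $\eta$ over $[0,N\tau]$ and produced the factor $5R^\beta$.

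To close the invariance I would use $R^\beta\le 1+R$ (valid for $\beta\le1$, $R\ge0$) to bound the bracket by $c_{\alpha',\beta}(2+R)+\left\Vert F\right\Vert_\infty$, and then fix parameters in two stages, as in \eqref{eq:R_cond}: first choose $\tau\in(0,1]$ so small that $(k_{\alpha'\beta+\alpha'}+1)\left\Vert X\right\Vert_{\alpha;0,T}\,c_{\alpha',\beta}\,\tau^{\alpha-\alpha'}\le\tfrac12$, which absorbs the part linear in $R$; then choose $R$ large enough that $(k_{\alpha'\beta+\alpha'}+1)\left\Vert X\right\Vert_{\alpha;0,T}(2c_{\alpha',\beta}+\left\Vert F\right\Vert_\infty)\tau^{\alpha-\alpha'}\le\tfrac{R}{2}$ and $|y_0|\le R$. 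These two choices yield $\left\Vert S_\eta(Z)\right\Vert_{\alpha';t_0,\tau}\le R$, hence $S_\eta(Z)\in M$. Crucially, since $\left|F(t_0,\centerdot)\right|\le\left\Vert F\right\Vert_\infty$ is independent of the induction index $N$ and of the history $\eta$, the same $\tau$ and $R$ serve at every window, so the induction of Steps 7--9 runs without further change: Schauder's Theorem \ref{thm:SCHAUDER} provides a fixed point of $S_\eta$ on each $[N\tau,(N+1)\tau]$, patching produces $Y$ on $[0,T]$ with $\left\Vert Y\right\Vert_{\alpha';k\tau,\tau}\le R$ for all $k$, Lemma \ref{lem:Glueing_holder_II} bounds $\left\Vert Y\right\Vert_{\alpha';[0,T]}$, and the bootstrap of Step 2 upgrades $Y$ to $C^{\alpha}([0,T],U)$.
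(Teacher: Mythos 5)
Your proposal is correct and follows essentially the same route as the paper's proof: both reuse Steps 1--5 and 7--9 of Theorem \ref{thm:F_is_sublinear} unchanged and both identify the boundedness of $F$, via $\left|F\left(t_{0},Z^{\eta}\right)\right|=\left|F\left(t_{0},\eta\right)\right|\le\left\Vert F\right\Vert _{\infty}$, as the replacement for the delicate history estimate \eqref{eq:Sgen-3}, which is exactly what makes the invariance uniform over the windows and admits $\beta=1$. The only cosmetic difference is the choice of radius: the paper simply fixes $R=1$ and shrinks $\tau$ so that $K\tau^{\alpha-\alpha'}=\varepsilon\le1$, whereas you keep a general $R$ and use $R^{\beta}\le1+R$ with a two-stage tuning of $\tau$ and then $R$; both are valid.
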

\begin{proof}
This proof is simpler than the one of
 Theorem \ref{thm:F_is_sublinear} 
(where $F$ is not bounded) and we will explain here the significant changes.
 We start
re-introducing the objects. First, as explained in Step 1. of Theorem
\ref{thm:F_is_sublinear} we can assume that $\frac{1}{2}<\alpha'<\alpha$ and
also that there is $\theta\in\left(0,1\right)$ such that $F$ is
$\left(\alpha'\theta^{2},\beta\right)$-Hölder continuous and
the inequalities \eqref{eq:alpha_prime} together with
\eqref{eq:alpha_prime_theta} are still in force.

Second, we re-define the parameters $K, \varepsilon$ and $\tau$. So,
differently from \eqref{EStep6} in Step 6.,  we set
$$ K:=\left(k_{\alpha'\beta+\alpha'}+1\right)\left\Vert X\right\Vert _{\alpha;0,T}\left(2c_{\alpha',\beta}+\left\lVert F\right\rVert _{\infty}\right).$$
We  fix $\varepsilon\in\left(0,\frac{K}{2}\wedge1\right)$ and define
again $\tau$ as in \eqref{ETau} so that  $\tau\in\left(0,1\right)$ and
\eqref{eq:tau_condition_II} still holds.

For an arbitrary $t_{0}\in[0,T)$ and $\eta\in C^{\alpha}([0,t_{0}])$
with $\eta_{0}=y_{0}$, similarly as in the proof of
Theorem \ref{thm:F_is_sublinear}, we will find a solution 
of   \eqref{eq:EDObis} with $t_1 =  t_{0}+\tau $.
This can be done performing the same program of Steps 2. to 5.
  in the proof of Theorem  \ref{thm:F_is_sublinear}.
The notation $M_{t_{0},\tau,1,a}^{\alpha'}$ will denote 
the same ball as in   \eqref{eq:DEF_M}.
We define $Z^{\eta}$ as in   \eqref{eq:Z_eta} and 
$S_{\eta}:M_{t_{0},\tau,1,\eta_{t_{0}}}^{\alpha'}\rightarrow 
C^{\alpha'}([t_{0},t_{0}+\tau])$ 
as in \eqref{eq:DEF_S_bounded}. 


In order to show that $S_{\eta}$ has a fixed point (which therefore solves
  \eqref{eq:EDObis}) we need to prove 
first that 
\begin{equation} \label{SInclusion}
S_{\eta}\left(M_{t_{0},\tau,1,\eta_{t_{0}}}^{\alpha'}\right)\subset M_{t_{0},\tau,1,\eta_{t_{0}}}^{\alpha'}.
\end{equation}
This will replace
 Steps 6. and 8. of the proof of Theorem 
\ref{thm:F_is_sublinear} which
 will merge, since it will not be necessary
to distinguish $t_0 = 0$ from $t_0 = N \tau,\  N \ge 1$.

Indeed, given $Z\in M_{t_{0},\tau,1,\eta_{t_{0}}}^{\alpha'}$, from
Young integral inequality   (\ref{eq:Young_inequality}), replacing
$\gamma$ 
 there by $\alpha'\beta$ 
as for   \eqref{eq:S1},  it follows that 
\begin{align}
\left\Vert S_{\eta}\left(Z\right)\right\Vert _{\alpha';t_{0},\tau} & \le\left(k_{\alpha'\beta+\alpha'}+1\right)\left\Vert X\right\Vert _{\alpha';t_0,\tau}\left(\left\Vert F\left(\centerdot,Z^{\eta}\right)\right\Vert _{\alpha'\beta;t_{0},\tau}+\left|F\left(t_{0},Z^{\eta}\right)\right|\right).\label{eq:S1_bounded}
\end{align}
Since $F$ is $\left(\alpha',\beta\right)$-Hölder continuous, it follows
from Proposition \ref{prop:FoY_is_continuous}, see   (\ref{eq:Comp1})
with $\alpha'$ (resp. $Z^\eta$) instead of $\alpha$ (resp. $Y$) that
\begin{align}
\left\Vert F\left(\centerdot,Z^{\eta}\right)\right\Vert _{\alpha'\beta;t_{0},\tau} & \le c_{\alpha',\beta}\left(1+\left\Vert Z^{\eta}\right\Vert _{\alpha';t_{0},\tau}^{\beta}\right)\nonumber \\
 & =c_{\alpha',\beta}\left(1+\left\Vert Z\right\Vert _{\alpha';t_{0},\tau}^{\beta}\right)\nonumber \\
 & \le c_{\alpha',\beta}\left(1+1^{\beta}\right)\nonumber \\
 & \le2c_{\alpha',\beta}.\label{eq:S2_bounded}
\end{align}
Also, since $F$ is bounded it holds 
\begin{align}
\left|F\left(t_{0},Z^{\eta}\right)\right| & =\left|F\left(t_{0},\eta\right)\right|\nonumber \\
 & \le\left\lVert F\right\rVert _{\infty}.\label{eq:S3_bounded}
\end{align}
 We substitute   (\ref{eq:S2_bounded}) and (\ref{eq:S3_bounded}) into
  (\ref{eq:S1_bounded}). Using $\left\Vert X\right\Vert _{\alpha';t_{0},\tau}\le\left\Vert X\right\Vert _{\alpha;0,T}\tau^{\alpha-\alpha'}$,
 $\varepsilon\le1$
and recalling that $\tau$ satisfies   \eqref{EStep6}, it
 follows
that 
\begin{align*}
\left\Vert S_{\eta}\left(Z\right)\right\Vert _{\alpha';t_{0},\tau} & \le\left(k_{\alpha'\beta+\alpha'}+1\right)\left\Vert X\right\Vert _{\alpha;0,T}\tau^{\alpha-\alpha'}\left(2c_{\alpha',\beta}+\left\lVert F\right\rVert _{\infty}\right)\\
 & =K\tau^{\alpha-\alpha'}\\
 & =\varepsilon\\
 & \le1,
\end{align*}
henceforth $S_{\eta}\left(M_{t_{0},\tau,1,\eta_{t_{0}}}^{\alpha'}\right)\subset M_{t_{0},\tau,1,\eta_{t_{0}}}^{\alpha'}$.
This concludes the proof of   \eqref{SInclusion}.

The sequel of the proof consists in treating simultaneously
the corresponding Steps 7. and 9. of the proof of Theorem
\ref{thm:F_is_sublinear}.
We claim that $S_{\eta}:M_{t_{0},\tau,1,\eta_{t_{0}}}^{\alpha'}\rightarrow M_{t_{0},\tau,1,\eta_{t_{0}}}^{\alpha'}$
has a fixed point.   

 Indeed, we already know that $M_{t_{0},\tau,1,\eta_{t_{0}}}^{\alpha'}$
is a compact set with respect $\alpha'\theta$-Hölder topology (see
Step 4. in the proof of 
Theorem \ref{thm:F_is_sublinear}) and $S_{\eta}$ is continuous
with respect $\alpha'\theta$-Hölder topology (see Step 5. in the proof of
 Theorem
\ref{thm:F_is_sublinear}). So $S_{\eta}$ verifies the hypothesis
of Schauder's Theorem \ref{thm:SCHAUDER},
  hence there is a fixed point for $S_{\eta}$,
which we denote by $W\in C^{\alpha'}([t_{0},t_{0}+\tau])$;
$W$ also belongs to 
$ C^{\alpha}([t_{0},t_{0}+\tau])$ by Step 2. in the proof of
 Theorem \ref{thm:F_is_sublinear}.
In other words, 
\begin{equation}\label{eq:W-solves}
W_{t}=\eta_{t_{0}}+\int_{t_{0}}^{t}F\left(u,W^{\eta}\right)dX_{u}, t \in 
[t_0,t_0+\tau].
\end{equation}

Finally, we can conclude the proof showing that there is a solution
 $Y\in C^{\alpha}([0,T])$ to   \eqref{eq:EDO} proceeding similarly
as after   \eqref{eq:W}.

\end{proof}
\section{\label{sec:Particular-Case}
On a   particular path-dependent structure of the vector field.}

We conclude the paper showing that, in some cases, the solution to
our path-dependent equation can be constructed directly. In this case
the method leads us to an existence (and even uniqueness)
 statement under weaker assumptions
than in Theorems \ref{thm:F_is_sublinear} and \ref{T20}. 
This happens when the past dependence structure of the vector $F$ allows
a gap (of size $\delta>0$) between the past and the present, see
Definition \ref{def:F_is_non_ant_delta} below for the precise meaning. 
\begin{defn}
\label{def:F_is_non_ant_delta}Let $F:\left[0,T\right]\times C\left(\left[0,T\right],U\right)\rightarrow L$
be an vector field. We will say that it is \textbf{$\delta$-non-anticipating}
if it satisfies 
\begin{equation}
F\left(t,Y\right)=F\left(t,\mathcal{Y}_{(t-\delta)_{+}}\right),\label{eq:F_nonant}
\end{equation}
for all $Y\in C\left(\left[0,T\right],U\right)$ and $t\in\left[0,T\right]$. 
\end{defn}
Under this assumption we can construct a solution step by step on
intervals $[0,\delta],[0,2\delta],\ldots$ and so on only supposing,
for instance, that for each $Z\in C^{\alpha}([0,T])$, $t\mapsto F(t,Z)$
is $\gamma$-Hölder continuous with $\alpha+\gamma>1$. 
The theorem below holds even when $U$ and $V$ are generic
Banach spaces.
\begin{thm}
\label{TConstructive} Let $F:\left[0,T\right]\times C^{\alpha}\left(\left[0,T\right],U\right)\rightarrow L$
be a $\delta$-non-anticipating vector field for some $\delta>0$.
Suppose that for each $Z\in C^{\alpha}\left([0,T],U\right)$, $t\mapsto F(t,Z)$
is $\gamma$-Hölder continuous with $\alpha+\gamma>1$. Let $X\in C^{\alpha}\left(\left[0,T\right],V\right)$.
Then for each $y_{0}\in U$ there is a unique solution $Y\in C^{\alpha}([0,T])$
for the equation 
\begin{equation}
Y_{t}=y_{0}+\int_{0}^{t}F\left(u,Y\right)dX_{u},\ t\in\left[0,T\right].\label{eq:EDO_delta}
\end{equation}
\end{thm}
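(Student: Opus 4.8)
The plan is to exploit the $\delta$-gap in order to replace the fixed-point argument of Theorems \ref{thm:F_is_sublinear} and \ref{T20} by a finite sequence of explicit Young integrations. The crucial observation is that, by the $\delta$-non-anticipating property \eqref{eq:F_nonant}, for $t$ in a slab $[k\delta,(k+1)\delta]$ the quantity $F(t,Y)=F(t,\mathcal{Y}_{(t-\delta)_{+}})$ consults the trajectory of $Y$ only up to time $(t-\delta)_{+}\le k\delta$; hence on such a slab the integrand in \eqref{eq:EDO_delta} is completely determined by the restriction $Y|_{[0,k\delta]}$ and involves none of the unknown values of $Y$ on $[k\delta,(k+1)\delta]$. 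I would therefore fix the smallest integer $N$ with $N\delta\ge T$ and build the solution successively on $[0,\delta],[0,2\delta],\ldots,[0,N\delta]\cap[0,T]$.

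For the base step, on $[0,\delta]$ one has $(t-\delta)_{+}=0$, so $\mathcal{Y}_{0}$ is the constant path equal to $y_{0}$, which I write $\mathbf{y_{0}}$, and $F(u,Y)=F(u,\mathbf{y_{0}})$. By hypothesis $u\mapsto F(u,\mathbf{y_{0}})$ is $\gamma$-Hölder, so Theorem \ref{thm:Young_integral} (with $W_{u}:=F(u,\mathbf{y_{0}})$, legitimate since $\alpha+\gamma>1$) guarantees that $Y_{t}:=y_{0}+\int_{0}^{t}F(u,\mathbf{y_{0}})\,dX_{u}$ is a well-defined element of $C^{\alpha}([0,\delta])$, and is the unique function satisfying \eqref{eq:EDO_delta} there, since the right-hand side contains no free occurrence of $Y$. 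For the inductive step, assume a necessarily unique solution has been constructed on $[0,k\delta]$ and lies in $C^{\alpha}([0,k\delta])$; extend it to a path $\tilde{Z}\in C^{\alpha}([0,T])$, for instance constantly past $k\delta$. For $u\in[k\delta,(k+1)\delta]$ the $\delta$-non-anticipation gives $F(u,Y)=F(u,\tilde{Z})$, a $\gamma$-Hölder function of $u$ by hypothesis and independent of the chosen extension because $(u-\delta)_{+}\le k\delta$. Applying Theorem \ref{thm:Young_integral} on $[k\delta,(k+1)\delta]$ with $W_{u}:=F(u,\tilde{Z})$ yields the unique $\alpha$-Hölder extension
\[
Y_{t}:=Y_{k\delta}+\int_{k\delta}^{t}F(u,\tilde{Z})\,dX_{u},\qquad t\in[k\delta,(k+1)\delta].
\]

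It remains to glue and conclude. Since the previously constructed piece on $[0,k\delta]$ and the new piece on $[k\delta,(k+1)\delta]$ are both $\alpha$-Hölder and agree at the common endpoint $k\delta$, Lemma \ref{lem:Gluing_holder} (with $I=[0,k\delta]$ and $J=[k\delta,(k+1)\delta]$) shows the glued path belongs to $C^{\alpha}([0,(k+1)\delta])$; splitting the integral at $k\delta$ shows it satisfies \eqref{eq:EDO_delta} on the whole interval. After $N$ steps $[0,T]$ is covered. Uniqueness is automatic: at each stage the values of $Y$ on the new slab are forced by direct integration of a fixed integrand, so any two solutions coincide on $[0,\delta]$, then on $[0,2\delta]$, and so on up to $[0,T]$. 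I do not expect a genuine obstacle here, since the gap is exactly what removes the need for a fixed-point argument; the only points requiring care are verifying that $u\mapsto F(u,\tilde{Z})$ is independent of the chosen extension, a direct consequence of \eqref{eq:F_nonant}, and checking that the $\gamma$-Hölder regularity of each integrand together with $\alpha+\gamma>1$ justifies every invocation of Theorem \ref{thm:Young_integral}.
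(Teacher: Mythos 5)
Your proposal is correct and follows essentially the same route as the paper: both exploit the $\delta$-gap to reduce the equation to a finite sequence of explicit Young integrations, the only difference being that you build the solution slab by slab and glue, whereas the paper phrases the same iteration as a recursion $Y^{n}_{t}=y_{0}+\int_{0}^{t\wedge n\delta}F(u,Y^{n-1})\,dX_{u}$ on all of $[0,T]$ and checks that consecutive iterates agree on $[0,(n-1)\delta]$. No gaps.
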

\begin{proof}
We start discussing existence. Without restriction of generality we can suppose that $T=N\delta$
for some integer $N$. We denote by $Y^{0}$ the constant function
$t\mapsto y_{0}\in U$ on $[0,T]$. By recurrence arguments we construct
a sequence $Y^{1},\ldots,Y^{N}$ in $C^{\alpha}([0,T])$ verifying,
for $n=1,\ldots,N$, 
\begin{equation}
Y_{t}^{n}:=y_{0}+\int_{0}^{t\wedge n\delta}F\left(u,Y^{n-1}\right)dX_{u},\ t\in[0,T].\label{eq:Y_def}
\end{equation}
Although we need to define $Y_{t}^{n}$ only for $t\in\left[0,n\delta\right]$,
we have chosen to extend it to the whole interval $t\in\left[0,T\right]$
since this simplifies the formulation of some arguments during this
proof. 

The expression \eqref{eq:Y_def} is well-defined via  Theorem
\ref{thm:Young_integral} with $W_{t}=F\left(t,Y^{n-1}\right)$. Indeed,
suppose that \eqref{eq:Y_def} holds replacing integer $n$ with $n-1$.
Since $Y^{n-1}\in C^{\alpha}([0,T])$, the hypothesis implies
that the path $t\in\left[0,T\right]\mapsto F\left(t,Y^{n-1}\right)$
is $\gamma$-Hölder continuous hence we can define $Y_{t}^{n}:=y_{0}+\int_{0}^{t\wedge n\delta}F\left(u,Y^{n-1}\right)dX_{u}$,
for $t\in\left[0,T\right]$. 

At this point our aim is to prove that $Y:=Y^{N}$ 
solves  equation \eqref{eq:EDO_delta}. For this we will show
that for $n=1,\ldots,N$ 
\begin{equation}
Y_{t}^{n}=y_{0}+\int_{0}^{t\wedge n\delta}F\left(u,Y^{n}\right)dX_{u},\ t\in[0,T].\label{eq:Y_ind}
\end{equation}
Taking into account the construction \eqref{eq:Y_def},
it will be enough to show by recurrence that, for every $n=1,\ldots,N$
\begin{equation}
F(u,Y^{n})=F(u,Y^{n-1}),\ u\in\left[0,n\delta\right].\label{Eqn}
\end{equation}
Suppose for a moment that, for every $n=1,\ldots,N$, 
\begin{equation}
Y_{t}^{n}=Y_{t}^{n-1},\,t\in[0,(n-1)\delta].\label{Eqn1}
\end{equation}
 Then \eqref{Eqn} will follow. Indeed let $u\in[0,n\delta]$, for
some $n$; then $t:=(u-\delta)_{+}$ belongs to $[0,(n-1)\delta]$;
thus from \eqref{Eqn1},  recalling $\mathcal{Y}_{r}\left(x\right):=Y_{r\wedge x}$
it follows 
\begin{equation} \label{Eqn2}
 \mathcal{Y}_{\left(u-\delta\right)_{+}}^{n}=\mathcal{Y}_{\left(u-\delta\right)_{+}}^{n-1},
u\in[0,n\delta].
\end{equation}
So, using \eqref{eq:F_nonant} and taking in account \eqref{Eqn2}, yields
\[
F\left(u,Y^{n}\right)=F\left(u,\mathcal{Y}_{(u-\delta)_{+}}^{n}\right)=F\left(u,\mathcal{Y}_{(u-\delta)_{+}}^{n-1}\right)=F\left(u,Y^{n-1}\right).
\]

It remains to show \eqref{Eqn1}; we will do it by induction on $n$.
The case $n=1$ is an obvious consequence of \eqref{eq:Y_def} evaluated
for $t=0$. We suppose now that \eqref{Eqn1} holds replacing $n$
with $n-1$.




Let $t\in[0,(n-1)\delta]$ for an integer $n$ with $n\ge2$. By \eqref{eq:F_nonant},
the construction \eqref{eq:Y_def}, the induction hypothesis related
to \eqref{Eqn1} and the recurrence \eqref{Eqn} with $n-1$ replacing $n$, we obtain 
\begin{eqnarray*}
Y_{t} & = & y_{0}+\int_{0}^{t\wedge n\delta}F\left(u,Y^{n-1}\right)dX_{u}\\
 & = & y_{0}+\int_{0}^{t\wedge(n-1)\delta}F\left(u,Y^{n-1}\right)dX_{u}\\
 & = & y_{0}+\int_{0}^{t\wedge(n-1)\delta}F\left(u,Y^{n-2}\right)dX_{u}\\
 & = & Y_{t}^{n-1}.
\end{eqnarray*}
This concludes the proof of the induction step in \eqref{Eqn1}. The
existence part of the theorem is finally established.

Uniqueness follows easily by an obvious induction argument.


\end{proof}

{\bf ACKNOWLEDGEMENTS.} 
The first named author was supported by the (Brazilian) National Council 
for Scientific and Technological Development (CNPq), under 
 grant Nr. 232925/2014-3.
 Part of the work was done during a visit of the second author to
the University of Bielefeld, SFB 701. He is grateful to Prof. Michael
R\"ockner for the kind invitation.

 \bibliographystyle{amsplain}
\addcontentsline{toc}{section}{\refname}\bibliography{rafael}

\def\polhk#1{\setbox0=\hbox{#1}{\ooalign{\hidewidth
  \lower1.5ex\hbox{`}\hidewidth\crcr\unhbox0}}}
  \def\polhk#1{\setbox0=\hbox{#1}{\ooalign{\hidewidth
  \lower1.5ex\hbox{`}\hidewidth\crcr\unhbox0}}} \def\cprime{$'$}
  \def\polhk#1{\setbox0=\hbox{#1}{\ooalign{\hidewidth
  \lower1.5ex\hbox{`}\hidewidth\crcr\unhbox0}}}
  \def\polhk#1{\setbox0=\hbox{#1}{\ooalign{\hidewidth
  \lower1.5ex\hbox{`}\hidewidth\crcr\unhbox0}}}
  \def\polhk#1{\setbox0=\hbox{#1}{\ooalign{\hidewidth
  \lower1.5ex\hbox{`}\hidewidth\crcr\unhbox0}}} \def\cprime{$'$}
  \def\cprime{$'$}
\providecommand{\bysame}{\leavevmode\hbox to3em{\hrulefill}\thinspace}
\providecommand{\MR}{\relax\ifhmode\unskip\space\fi MR }
\providecommand{\MRhref}[2]{%
  \href{http://www.ams.org/mathscinet-getitem?mr=#1}{#2}
}
\providecommand{\href}[2]{#2}
\begin{thebibliography}{10}

\bibitem{schauderbonsall1962lectures}
F.~F. Bonsall and K.B. Vedak, \emph{Lectures on some fixed point theorems of
  functional analysis}, no.~26, Tata Institute of Fundamental Research Bombay,
  1962.

\bibitem{cho}
A.~Chojnowska-Michalik, \emph{Representation theorem for general stochastic
  delay equations}, Bull. Acad. Polon. Sci. S\'er. Sci. Math. Astronom. Phys.
  \textbf{26} (1978), no.~7, 635--642.

\bibitem{cosso_russo15a}
A.~Cosso and F.~Russo, \emph{\emph{Functional {I}t\^o versus {B}anach space
  stochastic calculus and strict solutions of semilinear path-dependent
  equations}}, \emph{Preprint} arXiv:1505.02926, 2015.

\bibitem{delfour}
M.~C. Delfour and S.~K. Mitter, \emph{Controllability, observability and
  optimal feedback control of affine hereditary differential systems}, SIAM J.
  Control \textbf{10} (1972), 298--328. \MR{0309587}

\bibitem{DGR}
C.~Di~Girolami and F.~Russo, \emph{Infinite dimensional stochastic calculus via
  regularization and applications}, Preprint \textup{HAL-INRIA, inria-00473947
  version 1} (2010), no.~Unpublished.

\bibitem{DGRnote}
\bysame, \emph{Clark-{O}cone type formula for non-semimartingales with finite
  quadratic variation}, C. R. Math. Acad. Sci. Paris \textbf{349} (2011),
  no.~3-4, 209--214.

\bibitem{dupire2009functional}
B.~Dupire, \emph{Functional it{\^o} calculus}, Bloomberg Portfolio Research
  Paper (2009), no.~2009-04.

\bibitem{flandoli_zanco13}
F.~Flandoli and G.~Zanco, \emph{\emph{An infinite-dimensional approach to
  path-dependent Kolmogorov's equations}}, To appear in \emph{Ann. Probab.},
  2013.

\bibitem{HairerBook}
P.~K. Friz and M.~Hairer, \emph{A course on rough paths}, Springer, 2014.

\bibitem{friz2010multidimensional}
P.~K. Friz and N.~B. Victoir, \emph{Multidimensional stochastic processes as
  rough paths: theory and applications}, Cambridge University Press Cambridge,
  2010.

\bibitem{gubinelli2004controlling}
M.~Gubinelli, \emph{Controlling rough paths}, Journal of Functional Analysis
  \textbf{216} (2004), no.~1, 86--140.

\bibitem{gubinelli2006young}
M.~Gubinelli, A.~Lejay, and S.~Tindel, \emph{Young integrals and spdes},
  Potential Analysis \textbf{25} (2006), no.~4, 307--326.

\bibitem{leao_ohashi_simas14}
D.~Le\~ao, A.~Ohashi, and A.~B. Simas, \emph{\emph{Weak functional {I}t\^o
  calculus and applications}}, \emph{Preprint} arXiv:1408.1423v2, 2014.

\bibitem{lejay:inria-00402397}
A.~Lejay, \emph{{Controlled differential equations as Young integrals: a simple
  approach}}, Journal of Differential Equations \textbf{249} (2010), 1777--1798
  (Anglais).

\bibitem{lyons-young}
T.~Lyons, \emph{Differential equations driven by rough signals. {I}. {A}n
  extension of an inequality of {L}. {C}. {Y}oung}, Math. Res. Lett. \textbf{1}
  (1994), no.~4, 451--464. \MR{1302388}

\bibitem{maslowski}
B.~Maslowski and D.~Nualart, \emph{Evolution equations driven by a fractional
  {B}rownian motion}, J. Funct. Anal. \textbf{202} (2003), no.~1, 277--305.
  \MR{1994773}

\bibitem{mohammed}
S.~E.~A. Mohammed, \emph{Stochastic functional differential equations},
  Research Notes in Mathematics, vol.~99, Pitman (Advanced Publishing Program),
  Boston, MA, 1984.

\bibitem{rw}
L.~C.~G. Rogers and D.~Williams, \emph{Diffusions, {M}arkov processes, and
  martingales. {V}ol. 2}, Cambridge Mathematical Library, Cambridge University
  Press, Cambridge, 2000, It\^{o} calculus, Reprint of the second (1994)
  edition.

\bibitem{young1936inequality}
L.~C. Young, \emph{An inequality of the {H}{\"o}lder type, connected with
  stieltjes integration}, Acta Mathematica \textbf{67} (1936), no.~1, 251--282.

\end{thebibliography}

\end{document}